\newtheorem{definition}{Definition}[section]
\newtheorem{lemma}{Lemma}[section]
\newtheorem{theorem}{Theorem}[section]
\newtheorem{proposition}{Proposition}[section]
\newenvironment{example}{\begin{description} \item[\bf Example:] }%
{\end{description}}
\newcommand{\be}{\begin{equation}}
\newcommand{\ee}{\end{equation}}
\newcommand{\diag}{\begin{smallmatrix}\vspace{-0.5ex}\textrm{\normalsize diag}\\\vspace{-0.8ex}j=1,\ldots,n\end{smallmatrix}}
\begin{document}
\title{Multigrid methods for Toeplitz linear systems with different size reduction}
\author{Marco Donatelli, Stefano Serra-Capizzano
       \thanks{Dipartimento di Fisica e
       Matematica, Universit\`a dell'Insubria,
       Via Valleggio 11, 22100 Como (ITALY).
       Email:\ \{marco.donatelli,stefano.serrac\}@uninsubria.it; serra@mail.dm.unipi.it}
       , and Debora Sesana
       \thanks{Dipartimento di Scienze Economiche e Metodi Quantitativi, 
       Universit\`a degli Studi del Piemonte Orientale - Amedeo Avogadro,
       Via Perrone 18, 28100 Novara (ITALY).
       Email:\ debora.sesana@eco.unipmn.it}
       }
\maketitle
\date{}

\begin{abstract}
Starting from the spectral analysis of $g$-circulant matrices, we
consider a new multigrid method for circulant and Toeplitz matrices with given
generating function. We assume that the size $n$ of the coefficient matrix is
divisible by $g\ge 2$ such that at the lower level the system is reduced
to one of size $n/g$ by employing $g$-circulant based projectors. We
perform a rigorous two-grid convergence analysis in the circulant case
and we extend experimentally the results to the Toeplitz setting,
by employing structure preserving projectors. The optimality
of the proposed two-grid method and of the multigrid method is proved, when the number $\theta \in \mathbb{N}$
of recursive calls is such that $1 < \theta < g$. The previous analysis is used
also to overcome some pathological cases, in which the generating
function has zeros located at ``mirror points'' and the standard
two-grid method with $g=2$ is not optimal.
The numerical experiments show the correctness and
applicability of the proposed ideas both for circulant and Toeplitz matrices.
\end{abstract}
\ \noindent {\bf Keywords:} circulant, $g$-circulant and Toeplitz matrices \and two-grid and multigrid methods.
\\
{\bf AMS SC:} 65N55, 65F10, 65F15.

\section{Introduction}
\label{intro}
In the last 20 years multigrid methods have gained a remarkable
reputation as fast solvers for structured matrices associated to shift
invariant operators where the size $n$ is large and the system shows
a conditioning growing polynomially with $n$ (see
\cite{FS1,Smulti,FS2,CCS,HS,SCC,ADS,mcirco,H} and the references therein).
Under suitable mild assumptions, the considered techniques are optimal showing
linear or almost linear ($O(n\log n)$ arithmetic operations as the
celebrated fast Fourier transform (FFT)) complexity for reaching the
solution within a preassigned accuracy and a convergence rate
independent of the size $n$ of the involved system. These excellent
features carry over the multilevel setting and mimic very well
those already known in the context of elliptic ordinary and partial
differential equations (see \cite{Hack,RStub,Trot,Sun} and the
references therein). In particular, if the underlying structures are also sparse
as in the multilevel banded case, then the cost of solving the involved
linear system is proportional to the order of the coefficient matrix with constant
depending linearly on the bandwidths at each level.
We mention that the cost of direct methods is $O(n\log n)$ operations
in the case of trigonometric matrix algebras (circulant, $\tau$, \dots)
and it is $O(n^{\frac{3d-1}{d}})$ for $d$-level Toeplitz matrices (see \cite{KCM}).
Concerning multilevel Toeplitz structures, superfast methods represents a good alternative,
even if the algorithmic error has to be controlled, with a cost of $O(n^{\frac{3d-2}{d}}\log^2(n))$:
the cost is really competitive for $d=1$, while the deterioration is evident for $d>1$ since
it is nontrivial to exploit the structure at the inner levels (see \cite{barel} and references therein).
Moreover, in the last case the most popular preconditioning strategies by matrix algebra
can be far from being optimal in the multidimensional case (see \cite{negablo}).
On the other hand, multigrid method are optimal also for polynomially ill-conditioned
multidimensional problems and they can be extended to the case of low rank corrections
of the consider structured matrices, allowing to deal also with
the modified Strang preconditioner widely used in the literature
(see \cite{CNrev} and the references therein).

The main novelty contained in the works from the structured matrices
literature is the use of the symbol. Indeed, starting from the
initial proposal in \cite{FS1}, we know that the convergence
analysis of the two-grid and V-cycle can be handled in a compact and
elegant manner by studying few analytical properties of the symbol
(so the study does not involve the entries of the matrix and, more
importantly, the size $n$ of the system). Already in the two-grid
convergence analysis, it is evident that the optimality can be
reached only if the symbol $f$ has a finite number of zeros of
finite order and not located at mirror points: more explicitly, if
$x_0$ is a zero of $f$ then $f(x_0+\pi)$ must be greater than zero. Here we show that the
second requirement is not essential since it depends on the choice
of projecting the original matrix of size $n$ into a new one of size
$n/2$. The latter is not compulsory so that, by choosing a different
size reduction from $n$ to $n/g$ and $g>2$, we can overcome the
pathology induced by the mirror points.
Other approaches for dealing with such pathologies were proposed in \cite{CCS,HS}.

In this paper we propose a new multigrid method where the fine problem of size $n$
is projected to a coarser problem of size $n/g$, $g=2,3,\dots$.
We perform a two-grid analysis using the ideas in \cite{mcirco}
for circulant structures and by exploiting the
spectral analysis of $g$-circulant matrices already performed in
\cite{NSS}.
As shown in \cite{D}, such two-grid analysis is an algebraic generalization of the classical local
Fourier analysis and it allows one to apply the results obtained for circulant
matrices also to Toeplitz matrices. A feature of our multigrid is that
the coarse problem of size $n/g$ with $g>2$
allows to obtain optimal multigrid methods with $g-1$ recursive calls:
it is enough to perform the analysis of the arithmetic computational cost related to the size reduction $n/g$
between two consecutive levels and to invoke the results in \cite{Trot}.
A further property of the proposed multigrid is that the pathologies induced by the
mirror points are bypassed as previously described. Our proposal could be
extended to the multilevel case by tensor product arguments considering
the increasing of the number of  ``mirror points''. Moreover a V-cycle
convergence analysis could be performed by following the steps in \cite{ADS,AD}
as a model. A rigorous study in this directions will be the subject of future research.

The paper is organized as follows. In Section~\ref{sec:circ} we set
the problem by recalling the main features of the circulant, Toeplitz and $g$-circulant
matrices. In Section~\ref{multi:circ} we report definitions and
classical convergence results concerning two-grid and multigrid iterations from \cite{RStub,Trot}.
In Section~\ref{cutting:circ} we define our grid-transfer operators and
we study the properties of the coarse matrix obtained by the Galerkin approach.
Section~\ref{proof:circ} is devoted to the proof of convergence of
our multigrid when applied to circulant matrices and to briefly discuss the
pathologies, that are eliminated by our algorithmic proposal.
Section \ref{sec:num} is
concerned with numerical experiments regarding circulant and Toeplitz matrices (e.g.
ill-conditioned linear systems coming from approximated differential
and integral problems). Section \ref{sec:final} is devoted to
conclusions and to sketch future lines of research.

\section{Circulant matrices and other related structures}
\label{sec:circ}

Let $f$ be a trigonometric polynomial defined over the set $Q=[0,2\pi)$
and having degree $c\geq 0$, i.e., $f(u)=\sum_{k=-c}^c a_k e^{iku}$, $i^{2}=-1$. From the Fourier coefficients of $f$,
that is
\begin{eqnarray}\label{coeff}
  a_{j}=\frac{1}{2\pi}\int_{Q}f(u)e^{-iju}du,\qquad j\in\mathbb{Z},
\end{eqnarray}
one can build the circulant matrix $C_n(f)=\left[a_{(r-s)\ {\rm
mod}\, n}\right]_{r,s=0}^{n-1}$. For example, let $f(u)=3-2e^{iu}+e^{-2iu}$. 
The degree of $f$ is $c=2$ and we have
$a_{0}=3$, $a_{1}=-2$ and $a_{-2}=1$; if we take $n>(2c-1)$, see the discussion below,
since $a_{(-2)\ {\rm mod}\, n}=a_{3}$ the circulant matrix $C_{5}(f)$ is given by
\begin{eqnarray*}
  C_{5}(f)=\left[\begin{array}{ccccc}
     3 &  0 &  1 &  0 & -2 \\
    -2 &  3 &  0 &  1 &  0 \\
     0 & -2 &  3 &  0 &  1 \\
     1 &  0 & -2 &  3 &  0 \\
     0 &  1 &  0 & -2 &  3
  \end{array}\right].
\end{eqnarray*}

It is clear that the Fourier
coefficient $a_{j}$ equals zero if the condition $|j|\leq c$ is
violated. The matrix $C_{n}(f)$ is said to be the circulant matrix
of order $n$ generated by $f$, and can be written as
$C_{n}(f)=\sum_{|j|\leq c}a_{j}Z_{n}^{j}$, where the matrix
\begin{eqnarray*}
  Z_{n}=\left[\begin{array}{cccc}
         0 & \cdots & 0 & 1\\
         1 &        &   & 0\\
           & \ddots &   & \vdots\\
         0 &        & 1 & 0
  \end{array}\right]
\end{eqnarray*}
is the cyclic permutation Toeplitz matrix.
In addition, if $F_{n}$ denotes the Fourier matrix of size $n$, i.e.
\begin{eqnarray}\label{V}
 F_{n} =\frac{1}{\sqrt{n}}\left[e^{-\frac{2\pi ijk}{n}}\right]_{j,k=0}^{n-1},
\end{eqnarray}
then it is well known (see e.g. \cite{Da}) that
\begin{eqnarray}\label{cn}
  C_{n}(f)=F_{n}\Delta_{n}(f)F_{n}^{H},
\end{eqnarray}
where
\begin{eqnarray}\label{deltan}
  \Delta_{n}(f)&=&\diag f\left(x_{j}^{(n)}\right),\qquad x_{j}^{(n)}=\frac{2\pi j}{n},\\
  \nonumber&=&{\rm diag}(\sqrt{n}F_{n}^{H}a),
  \qquad\qquad a=\left[a_{0},a_{1},\ldots,a_{n-1}\right]^{T},
\end{eqnarray}
$a$ being the first column of the matrix $C_{n}(f)$.

Under the assumption that $c\le
\left\lfloor(n-1)/2\right\rfloor$, the matrix $C_n(f)$ is
the Strang or natural circulant preconditioner of the corresponding
Toeplitz matrix $T_n(f)=\left[a_{(r-s)}\right]_{r,s=0}^{n-1}$
(see~\cite{CNrev} and the references therein). We observe that the
above-mentioned assumption $c\le \left\lfloor(n-1)/2\right\rfloor$ is fulfilled
at least definitely, since each $c$ is a fixed constant and $n$ is the matrix order:
in actuality, in real applications it is
natural to suppose that $n$ is large if we assume that $C_n(f)$
comes from an approximation process of an infinite-dimensional
problem. However if the symbol $f$ has a zero at zero (this happens in the case of
approximation of differential operators), then $C_n(f)$ is singular and it is usually replaced by a rank-one correction
that forces invertibility: the latter is called in the relevant literature
modified Strang preconditioner.

We end this section with the definition of $g$-circulant matrix.
A matrix $C_{n,g}$ of size $n$ is called $g$-circulant if its entries
obey the rule $C_{n,g}=\left[a_{(r-g s)\ {\rm mod}\,n}\right]_{r,s=0}^{n-1}$:
for an introduction and for the algebraic
properties of such matrices refer to Section 5.1 of the classical
book by Davis \cite{Da}, while new additional results can be found in
\cite{NSS,trench} and references therein. For instance if $n=5$ and $g=3$ then we
have
\begin{eqnarray*}
  C_{n,g}=\left[\begin{array}{ccccc}
    a_{0} & a_{2} & a_{4} & a_{1} & a_{3} \\
    a_{1} & a_{3} & a_{0} & a_{2} & a_{4} \\
    a_{2} & a_{4} & a_{1} & a_{3} & a_{0} \\
    a_{3} & a_{0} & a_{2} & a_{4} & a_{1} \\
    a_{4} & a_{1} & a_{3} & a_{0} & a_{2}
  \end{array}\right].
\end{eqnarray*}

Also in this case, as in ordinary circulant setting, the coefficients 
$a_{j}$, $j\in\mathbb{Z}$, could arise from a given symbol $f$ (see (\ref{coeff})).
For instance, with $f(u)=3-2e^{iu}+e^{-2iu}$, we find $a_{0}=3$, $a_{1}=-2$, and $a_{-2}=a_3=1$
so that
\begin{eqnarray*}
  C_{5,3}(f)=\left[\begin{array}{ccccc}
    3 & 0 & 0 & -2 & 1 \\
    -2 & 1 & 3 & 0 & 0 \\
    0 & 0 & -2 & 1 & 3 \\
    1 & 3 & 0 & 0 & -2 \\
    0 & -2 & 1 & 3 & 0
  \end{array}\right].
\end{eqnarray*}


\section{Two-grid and Multigrid methods}\label{multi:circ}

Let $A_{n}\in\mathbb{C}^{n\times n}$, and
$x_{n},\,b_{n}\in\mathbb{C}^{n}$. Let
$p_{n}^{k}\in\mathbb{C}^{n\times k}$, $k<n$, a given full-rank
matrix and let us consider a class of iterative methods of the form
\begin{eqnarray}\label{prepost}
  x_{n}^{(j+1)}=V_{n}x_{n}^{(j)}+\tilde{b}_{n}:=\mathcal{V}(x_{n}^{(j)},\tilde{b}_{n}),
\end{eqnarray}
where $A_{n}=W_{n}-N_{n}$, $W_{n}$ nonsingular matrix, $V_{n}:=I_{n}-W_{n}^{-1}A_{n}\in\mathbb{C}^{n\times n}$,
and $\tilde{b}_{n}:=W_{n}^{-1}b_{n}\in\mathbb{C}^{n}$.
A Two-Grid Method (TGM) is defined by the following algorithm:
\begin{center}
\begin{tabular}{l}
\vspace{1ex}
TGM$(V_{n,\rm{pre}}^{\nu_{\rm{pre}}},V_{n,\rm{post}}^{\nu_{\rm{post}}},p_{n}^{k})(x_{n}^{(j)})$
\vspace{1ex}\\
\hline
  0. $\tilde{x}_{n}=\mathcal{V}_{n,\rm{pre}}^{\nu_{\rm{pre}}}(x_{n}^{(j)},\tilde{b}_{n,\rm{pre}})$\\
  1. $d_{n}=A_{n}\tilde{x}_{n}-b_{n}$\\
  2. $d_{k}=(p_{n}^{k})^{H}d_{n}$\\
  3. $A_{k}=(p_{n}^{k})^{H}A_{n}p_{n}^{k}$\\
  4. Solve $A_{k}y=d_{k}$\\
  5. $\hat{x}_{n}=\tilde{x}_{n}-p_{n}^{k}y$\\
  6. $x_{n}^{(j+1)}=\mathcal{V}_{n,\rm{post}}^{\nu_{\rm{post}}}(\hat{x}_{n},\tilde{b}_{n,\rm{post}})$
\end{tabular}
\end{center}

Steps $1.\rightarrow5.$ define the ``coarse grid correction'' that
depends on the projector operator $p_{n}^{k}$, while Step $0.$ and
Step $6.$ consist, respectively, in applying $\nu_{\rm{pre}}$ times
and $\nu_{\rm{post}}$ times a ``pre-smoothing iteration'' and a
``post-smoothing iteration'' of the generic form given in
$(\ref{prepost})$.
The global iteration matrix of the TGM is then given by
\begin{eqnarray*}
  {\rm TGM}(V_{n,\rm{pre}}^{\nu_{\rm{pre}}},V_{n,\rm{post}}^{\nu_{\rm{post}}},p_{n}^{k})=
  V_{n,\rm{post}}^{\nu_{\rm{post}}}
  \left[I_{n}-p_{n}^{k}\left((p_{n}^{k})^{H}
  A_{n}p_{n}^{k}\right)^{-1}(p_{n}^{k})^{H}A_{n}\right]V_{n,\rm{pre}}^{\nu_{\rm{pre}}}.
\end{eqnarray*}

If $k$ is large, the numerical solution to the linear system at the Step $4.$ could
be computationally expensive. In such case a multigrid procedure is adopted.
Fix $0<m<n$, the sequence $0<n_m<n_{m-1}<\dots<n_1<n_0=n$ and the full-rank matrices
$p_{n_{i-1}}^{n_i} \in \mathbb{C}^{{n_{i-1}}\times{n_i}}$, for $i=1,\dots,m$.
The multigrid method produces the sequence $\{x_n^{(k)}\}_{k\in\mathbb{N}}$ defined by
$x_{n}^{(j+1)} = {\rm MGM}(V_{n,\rm{pre}}^{\nu_{\rm{pre}}},V_{n,\rm{post}}^{\nu_{\rm{post}}},
p_{n}^{n_1},A_n,b_n,\theta,0)(x_{n}^{(j)})$
with the function MGM defined recursively as follows:
\begin{center}
\begin{tabular}{l}
\vspace{1ex}
$x_{n_i}^{(j+1)} = {\rm MGM}(V_{n_i,\rm{pre}}^{\nu_{\rm{pre}}},V_{n_i,\rm{post}}^{\nu_{\rm{post}}},p_{n_i}^{n_{i+1}},A_{n_i},b_{n_i},\theta,i)(x_{n_i}^{(j)})$
\vspace{1ex}\\
\hline
If $i=m$ then Solve $A_{n_i}x_{n_i}^{(j+1)}=b_{n_i}$\\
Else\\
  \hspace{0.4cm} 0. $\tilde{x}_{n_i}=\mathcal{V}_{n_i,\rm{pre}}^{\nu_{\rm{pre}}}(x_{n_i}^{(j)},\tilde{b}_{n_i,\rm{pre}})$\\
  \hspace{0.4cm} 1. $d_{n_i}=A_{n_i}\tilde{x}_{n_i}-b_{n_i}$\\
  \hspace{0.4cm} 2. $d_{n_{i+1}}=(p_{n_i}^{n_{i+1}})^{H}d_{n_i}$\\
  \hspace{0.4cm} 3. $A_{n_{i+1}}=(p_{n_i}^{n_{i+1}})^{H}A_{n_i}p_{n_i}^{n_{i+1}}$\\
  \hspace{0.4cm} 4. $x_{n_{i+1}}^{(j+1)} = 0$\\
  \hspace{0.7cm} for $s=1$ to $\theta$\\
  \hspace{1.2cm}  $x_{n_{i+1}}^{(j+1)} = {\rm MGM}(V_{n_{i+1},\rm{pre}}^{\nu_{\rm{pre}}},V_{n_{i+1},\rm{post}}^{\nu_{\rm{post}}},p_{n_{i+1}}^{n_{i+2}},A_{n_{i+1}},d_{n_{i+1}},\theta,i+1)(x_{n_i}^{(j+1)})$\\
  \hspace{0.4cm} 5. $\hat{x}_{n_i}=\tilde{x}_{n_i}-p_{n_i}^{n_{i+1}}x_{n_{i+1}}^{(j+1)}$\\
  \hspace{0.4cm} 6. $x_{n_i}^{(j+1)}=\mathcal{V}_{n_i,\rm{post}}^{\nu_{\rm{post}}}(\hat{x}_{n_i},\tilde{b}_{n_i,\rm{post}})$
\end{tabular}
\end{center}
The choices $\theta=1$ and $\theta=2$ correspond to the well-known V-cycle and
W-cycle, respectively.

In the present paper, we are interested in proposing such kind of
techniques in the case where $A_{n}$ is a Toeplitz matrix.
However, for a theoretical analysis, we consider circulant matrices
according to the local Fourier analysis for classical multigrid
methods (see \cite{D}).
Even if we treat in detail the circulant case, in the spirit of the
paper \cite{ADS}, the same ideas can be plainly translated to other
matrix algebras associated to (fast) trigonometric transforms.
First we recall some convergence results from the theory of the algebraic
multigrid method given in \cite{RStub}.

By $\|\cdot\|_{2}$ we denote the Euclidean norm on $\mathbb{C}^{n}$
and the associated induced matrix norm over $\mathbb{C}^{n\times
n}$. If $X$ is positive definite,
$\|\cdot\|_{X}=\|X^{1/2}\cdot\|_{2}$ denotes the Euclidean norm
weighted by $X$ on $\mathbb{C}^{n}$ and the associated induced
matrix norm. Finally, if $X$ and $Y$ are Hermitian matrices, then
the notation $X\leq Y$ means that $Y-X$ is nonnegative definite.
In the following we use some functional norms: more precisely the
usual $L^{\infty}$ norm $\|\cdot\|_{\infty}$ defined as $\|f\|_{\infty}=\sup_{x\in Q}|f(x)|$,
and the weighted $L^{1}$ norm $\|\cdot\|_{1}$ defined as
$\|f\|_{1}=\frac{1}{2\pi}\int_{Q}|f(x)|dx$ (according to the Haar measure).

\begin{theorem}[\cite{RStub}]\label{teoconv}
Let $A_{n}$ be a positive definite matrix of size $n$ and let $V_{n}$ be defined as in the {\rm TGM} algorithm.
Suppose that there exists $\alpha_{\rm{post}}>0$ independent of $n$ such that
\begin{eqnarray}\label{cond1}
  \|V_{n,\rm{post}}x_{n}\|_{A_{n}}^{2}\leq\|x_{n}\|_{A_{n}}^{2}-\alpha_{\rm{post}}\|x_{n}\|_{A_{n}D_{n}^{-1}A_{n}}^{2},\qquad \forall x_{n}\in\mathbb{C}^{n},
\end{eqnarray}
where $D_{n}$ is the main diagonal of $A_{n}$. Assume that there exists $\gamma>0$ independent of $n$ such that
\begin{eqnarray}\label{cond3}
  \min_{y\in\mathbb{C}^{k}}\|x_{n}-p_{n}^{k}y\|_{D_{n}}^{2}\leq \gamma\|x_{n}\|_{A_{n}}^{2},\qquad \forall x_{n}\in\mathbb{C}^{n}.
\end{eqnarray}
Then $\gamma\geq\alpha_{\rm{post}}$ and
\begin{eqnarray*}
  \|{\rm TGM}(I,V_{n,\rm{post}}^{\nu_{\rm{post}}},p_{n}^{k})\|_{A_{n}}\leq\sqrt{1-\alpha_{\rm{post}}/\gamma}.
\end{eqnarray*}
\end{theorem}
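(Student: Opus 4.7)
The plan is to recognize this as the classical Ruge--Stüben two-grid estimate and to carry out the standard proof in three stages: (i) analyze the coarse-grid correction operator, (ii) bridge the two hypotheses via Cauchy--Schwarz, and (iii) combine with the smoothing property.

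First, I would set $\pi_n := I_n - p_n^k\bigl((p_n^k)^H A_n p_n^k\bigr)^{-1}(p_n^k)^H A_n$, so that the iteration matrix in question factors as $V_{n,\rm post}^{\nu_{\rm post}}\,\pi_n$. A direct calculation shows $\pi_n^2=\pi_n$ and $A_n\pi_n=\pi_n^H A_n$; hence $\pi_n$ is the $A_n$-orthogonal projector onto the $A_n$-orthogonal complement of the range of $p_n^k$. In particular $\|\pi_n x\|_{A_n}\le\|x\|_{A_n}$ for all $x$, and for every $w\in\range(\pi_n)$ and every $y\in\mathbb{C}^k$ one has the orthogonality identity $\langle A_n w, p_n^k y\rangle=0$.

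The heart of the argument, which I expect to be the only nontrivial step, is proving the inequality
\begin{equation*}
\|w\|_{A_n}^{2}\le\gamma\,\|w\|_{A_nD_n^{-1}A_n}^{2}\qquad\text{for every }w\in\range(\pi_n),
\end{equation*}
which bridges the \emph{approximation property} \pref{cond3} (stated in the $D_n$-norm) with the \emph{smoothing property} \pref{cond1} (stated in the $A_nD_n^{-1}A_n$-norm). To get it I would use the orthogonality above to write $\|w\|_{A_n}^{2}=\langle A_n w, w-p_n^k y\rangle$ for any $y$, then apply Cauchy--Schwarz in the splitting $\langle D_n^{-1/2}A_n w, D_n^{1/2}(w-p_n^k y)\rangle$ to obtain $\|w\|_{A_n}^{2}\le\|w\|_{A_nD_n^{-1}A_n}\,\|w-p_n^k y\|_{D_n}$, and finally minimize over $y$ and invoke \pref{cond3} applied with $x=w$, which yields $\min_y\|w-p_n^k y\|_{D_n}\le\sqrt{\gamma}\,\|w\|_{A_n}$. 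Dividing by $\|w\|_{A_n}$ gives the claimed bridge inequality. The relation $\gamma\ge\alpha_{\rm post}$ then follows immediately: the smoothing property forces $\alpha_{\rm post}\|w\|_{A_nD_n^{-1}A_n}^{2}\le\|w\|_{A_n}^{2}$ for every $w$, and combining this with the bridge inequality on any nonzero $w\in\range(\pi_n)$ gives $\alpha_{\rm post}\le\gamma$.

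Finally, to conclude, I would apply the smoothing property to the vector $\pi_n x$: one step of $V_{n,\rm post}$ yields $\|V_{n,\rm post}\pi_n x\|_{A_n}^{2}\le\|\pi_n x\|_{A_n}^{2}-\alpha_{\rm post}\|\pi_n x\|_{A_nD_n^{-1}A_n}^{2}$, which by the bridge inequality is bounded by $(1-\alpha_{\rm post}/\gamma)\|\pi_n x\|_{A_n}^{2}$. Since \pref{cond1} also implies $\|V_{n,\rm post}\cdot\|_{A_n}\le\|\cdot\|_{A_n}$, repeated application of $V_{n,\rm post}$ can only decrease the $A_n$-norm, so the same bound survives for $\nu_{\rm post}\ge 1$ powers. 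Using $\|\pi_n x\|_{A_n}\le\|x\|_{A_n}$ from the first step and taking square roots gives the stated contraction $\sqrt{1-\alpha_{\rm post}/\gamma}$. No delicate estimates are needed beyond the Cauchy--Schwarz step, and no structural hypothesis on $A_n$ beyond positive definiteness enters.
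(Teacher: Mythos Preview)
Your argument is the standard Ruge--St\"uben proof and is correct in every step: the $A_n$-orthogonal projector property of $\pi_n$, the Cauchy--Schwarz bridge between the $D_n$-norm approximation property and the $A_nD_n^{-1}A_n$-norm appearing in the smoothing property, and the final contraction estimate all go through exactly as you describe. Note, however, that the paper does not supply its own proof of this theorem at all---it is quoted as a known result from \cite{RStub} and used as a black box---so there is no ``paper's proof'' to compare against; what you have written is essentially the original argument that the citation points to.
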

Conditions \eqref{cond1} and \eqref{cond3} are usually called as ``smoothing property'' and
``approximation property'', respectively.

We note that $\alpha_{\rm{post}}$ and $ \gamma $
are independent of $n$ and hence, if the assumptions of Theorem \ref{teoconv}
are satisfied, the resulting TGM is not only convergent but also
optimal. In other words, the number of iterations in order to reach a given accuracy $\epsilon$ can be
bounded from above by a constant independent of $n$ (possibly depending on the parameter $\epsilon$).

Of course, if the given method is complemented with a convergent pre-smoother, then by the same theorem
we get a faster convergence. In fact, it is known that for square matrices $A$ and $B$ the spectra of $AB$ and $BA$
coincide.

Therefore ${\rm TGM}(V_{n,\rm{pre}}^{\nu_{\rm{pre}}},V_{n,\rm{post}}^{\nu_{\rm{post}}},p_{n}^{k})$
and ${\rm TGM}(I,V_{n,\rm{pre}}^{\nu_{\rm{pre}}}V_{n,\rm{post}}^{\nu_{\rm{post}}},p_{n}^{k})$ have the same ei\-gen\-values
so that
\begin{eqnarray*}
\|{\rm TGM}(V_{n,\rm{pre}}^{\nu_{\rm{pre}}},V_{n,\rm{post}}^{\nu_{\rm{post}}},p_{n}^{k})\|_{A_{n}}=
\|{\rm TGM}(I,V_{n,\rm{pre}}^{\nu_{\rm{pre}}}V_{n,\rm{post}}^{\nu_{\rm{post}}},p_{n}^{k})\|_{A_{n}}
< \sqrt{1-\alpha_{\rm{post}}/\gamma},
\end{eqnarray*}
and hence the presence of a pre-smoother can only improve the convergence.

Concerning multigrid methods, in \cite{RStub} the V-cycle convergence is considered
with a result which could be seen as the analog of Theorem \ref{teoconv}.
For other bounds concerning the convergence rate of the V-cycle see \cite{Notay} and reference therein.
Regarding the convergence of the W-cycle, we point out that a rigorous TGM analysis is sufficient
for determining the optimality of the W-cycle (see \cite{Trot}).

\section{Projector operators for circulant matrices}\label{cutting:circ}

Let $A_{n}:=C_{n}(f)$ be a circulant matrix generated by a
trigonometric polynomial $f$. In order to provide a general method
for obtaining a projector operator from an arbitrary banded circulant
matrix $P_{n}$, for some bandwidth independent of $n$, we introduce
the operator $Z_{n,g}^{k}\in\mathbb{R}^{n\times k}$,
$k=\frac{n}{g}\in\mathbb{N}$, where
\begin{eqnarray}\label{Z}
  Z_{n,g}^{k}=[\delta_{i-gj}]_{i,j},\qquad \delta_{r}=
  \left\{\begin{array}{cll} 1 & \textrm{if $r\equiv 0\textrm{ (mod
$n$)}$,} & \qquad i=0,\ldots,n-1,\\
0 & \textrm{otherwise,}&\qquad j=0,\ldots,k-1.\end{array} \right.
\end{eqnarray}
The operator $Z_{n,g}^{k}$ represents a special link between the
space of the frequencies of size $n$ and the corresponding space of
frequencies of size $k$.

\begin{lemma}
Let $F_{n}$ be the Fourier matrix of size $n$ defined in $(\ref{V})$
and let $Z_{n,g}^{k}\in\mathbb{R}^{n\times k}$ be the
matrix defined in $(\ref{Z})$. If $k=\frac{n}{g}\in\mathbb{N}$ then
\begin{eqnarray}\label{decompos}
  F_{n}^{H}Z_{n,g}^{k}=\frac{1}{\sqrt{g}}I_{n,g}F_{k}^{H},
\end{eqnarray}
where $I_{n,g}\in \mathbb{R}^{n\times k}$ and
\begin{eqnarray*}
  I_{n,g}=\left.\left[\begin{array}{c}
  I_{k}\\
  \hline
  I_{k}\\
  \hline
  \vdots\\
  \hline
  I_{k}
  \end{array}\right]\right\}\textrm{$g$ times,}
\end{eqnarray*}
with $I_{k}$ being the identity matrix of size $k$.
\end{lemma}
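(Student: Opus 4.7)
The plan is to verify the identity entrywise, exploiting the fact that both $Z_{n,g}^k$ and $I_{n,g}$ are very sparse: each column of $Z_{n,g}^k$ contains a single $1$ (at row $gj$), and each row of $I_{n,g}$ contains a single $1$ (at column $a\bmod k$). So multiplication by either of them amounts to a selection of entries from the adjacent Fourier matrix, and the identity reduces to a statement about how $(n,k)$-th roots of unity relate.

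First I would compute the left-hand side. Since column $j$ of $Z_{n,g}^k$ is the standard basis vector $e_{gj}$ (using the fact that $0\le gj\le gk-1=n-1$ and so no wrap-around in the mod-$n$ condition occurs), we get
\[
\bigl(F_n^H Z_{n,g}^k\bigr)_{a,j} \;=\; (F_n^H)_{a,gj} \;=\; \frac{1}{\sqrt{n}}\,e^{2\pi i a(gj)/n} \;=\; \frac{1}{\sqrt{n}}\,e^{2\pi i aj/k},
\]
where the last equality uses $n=gk$.

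Next I would compute the right-hand side. The block structure of $I_{n,g}$ (the vertical stack of $g$ copies of $I_k$) means that $(I_{n,g})_{a,b}=1$ precisely when $b=a\bmod k$, hence
\[
\Bigl(\tfrac{1}{\sqrt{g}}\,I_{n,g}F_k^H\Bigr)_{a,j} \;=\; \frac{1}{\sqrt{g}}\,(F_k^H)_{a\bmod k,\,j} \;=\; \frac{1}{\sqrt{gk}}\,e^{2\pi i (a\bmod k) j/k}.
\]
Because $e^{2\pi i(\cdot)j/k}$ is periodic with period $k$ in its first argument, this equals $\frac{1}{\sqrt{gk}}e^{2\pi i aj/k}$, and the normalizations match via $\sqrt{gk}=\sqrt{n}$.

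There is no real obstacle here; the proof is a one-line entrywise check once one unpacks the definitions. The only thing to keep track of is the indexing conventions: that the nonzero pattern of $Z_{n,g}^k$ really places a $1$ at row $gj$ (which requires $gj<n$, guaranteed by $k=n/g$), and that $I_{n,g}$ implements reduction modulo $k$ on the row index. Both are immediate from \eqref{Z} and the displayed definition of $I_{n,g}$.
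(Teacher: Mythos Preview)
Your entrywise verification is correct and complete: the key observations that column $j$ of $Z_{n,g}^k$ is $e_{gj}$ (no wrap-around since $gj\le g(k-1)<n$), that row $a$ of $I_{n,g}$ is $e_{a\bmod k}^T$, and that $g/n=1/k$ combine to give the identity immediately. The paper itself does not supply a proof but refers the reader to \cite[Lemma 3.3 and Remark 3.5]{NSS}; your direct computation is exactly the kind of argument one would expect there, so there is nothing to compare or correct.
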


This simple relation (see \cite[Lemma 3.3 and Remark 3.5]{NSS} for the details of the proof
and \cite{trench} for recent findings on these structures)
is the key step in defining an algebraic multigrid method, since it
allows us to obtain again a circulant matrix at the lower level.
Indeed, denoting by $\Delta_{n}$ the diagonal matrix obtained from
the eigenvalues of $A_{n}$ (see $(\ref{deltan})$), we infer that
$\Delta_{k}:=\frac{1}{g}I_{n,g}^T\Delta_{n}I_{n,g}$
is again a diagonal matrix. Therefore
\begin{eqnarray*}
  (Z_{n,g}^{k})^{H}A_{n}Z_{n,g}^{k}&=&(Z_{n,g}^{k})^{H}F_{n}\Delta_{n}F_{n}^{H}Z_{n,g}^{k}\\
  &=&\frac{1}{g}F_{k}I_{n,g}^T\Delta_{n}I_{n,g}F_{k}^{H}\\
  &=&F_{k}\Delta_{k}F_{k}^{H}\\
  &=&A_{k},
\end{eqnarray*}
where $A_{k}$ is a new circulant matrix. Consequently, starting from the matrix
$Z_{n,g}^{k}$, it is possible to define a generic projector
\begin{eqnarray}\label{procirc}
  p_{n,g}^{k}=P_{n}Z_{n,g}^{k},
\end{eqnarray}
where $P_{n}$ is a circulant matrix. Indeed $P_{n}^{H}A_{n}P_{n}$ is
a circulant matrix and then
$A_{k}=(p_{n,g}^{k})^{H}A_{n}p_{n,g}^{k}$ is again a circulant matrix. We
note that, since $k=\frac{n}{g}\in\mathbb{N}$, $n$ must be a
multiple of $g$. We are left to determine the conditions to be satisfied
by $P_{n}=C_{n}(p)$ (or better by its generating function $p$) in order to get
a projector which is effective in terms of convergence.

\begin{definition}
Given $x\in[0,2\pi)$, $g \in \mathbb{N}$, $g \geq 2$, the set of \emph{$g$-corners}
of $x$ is $\Omega_g(x)=\{y = x +\frac{2\pi k}{g}$ (mod $2\pi$) $\mid k=0,\ldots,
g-1\}$ and the set of \emph{$g$-mirror points} is $\mathcal{M}_g(x) = \Omega_g(x)\setminus\{x\}$.
\end{definition}

\noindent
\textbf{TGM conditions}
\hspace{0.05cm}
\emph{Let $A_{n}:=C_{n}(f)$ with $f$ nonnegative, trigonometric
polynomial and let $p_{n,g}^{k}=C_{n}(p)Z_{n,g}^{k}$ with
$p$ trigonometric polynomial. Assume that $f(x_{0})=0$ for
$x_{0}\in[0,2\pi)$, choose $p$ such that the following relations
\begin{eqnarray}
  \lim_{x\rightarrow x_{0}}\frac{p^{2}(y)}{f(x)}<\infty, \qquad & \forall \, y \in \mathcal{M}_g(x), \label{p2f1}\\
  \sum_{y \in \Omega_g(x)} p^2(y)>0,\qquad  &\forall \, x\in[0,2\pi), \label{p2f3}
\end{eqnarray}
are fulfilled.
}

If $f$ has a unique zero $x_{0}\in[0,2\pi)$, then
we set $P_{n}=C_{n}(p)$ where $p$ is a trigonometric
polynomial defined as
\begin{eqnarray}\label{polybis}
  p(x)=\prod_{\hat{x}\in\mathcal{M}_g(x_{0})}(2-2\cos(x-\hat{x}))^{\left\lceil \beta/2\right\rceil}\sim
  \prod_{\hat{x}\in\mathcal{M}_g(x_{0})}|x-\hat{x}|^{2\left\lceil \beta/2\right\rceil},
\end{eqnarray}
for $x\in[0,2\pi)$, with
\begin{eqnarray*}
  \beta\geq\beta_{\textrm{min}}=\min\left\{i\left|\lim_{x\rightarrow x_{0}}
  \frac{|x-x_{0}|^{2i}}{f(x)}<+\infty\right.\right\},
\end{eqnarray*}
thus conditions $(\ref{p2f1})$ and $(\ref{p2f3})$ are satisfied.

Before proving (in Section \ref{sect:TGMconv}) that conditions \eqref{p2f1} and \eqref{p2f3}
are sufficient to assure the TGM optimality, we consider
a crucial result both from a theoretical and a practical point of view.

\begin{proposition}\label{fhat} Let $f$ be a nonnegative function, $k=\frac{n}{g}\in\mathbb{N}$,
$p_{n,g}^{k}=C_{n}(p)Z_{n,g}^{k}\in\mathbb{C}^{n\times k}$,
with $p$ trigonometric polynomial satisfying the condition $(\ref{p2f1})$ for any
zero of $f$ and globally the condition $(\ref{p2f3})$. Then the matrix
$(p_{n,g}^{k})^{H}C_{n}(f)p_{n,g}^{k}\in\mathbb{C}^{k\times k}$
coincides with $C_{k}(\hat{f})$ where $\hat{f}$ is nonnegative and
\begin{eqnarray}\label{f2t}
  \hat{f}(x)=\frac{1}{g}\sum_{y\in\Omega_g\left(\frac{x}{g}\right)}f(y)|p|^{2}(y),
\end{eqnarray}
for $x\in[0,2\pi)$, i.e., the projected matrix is obtained picking every
$g$-th entry out of the symbol $f|p|^2$. In particular
\begin{itemize}
  \item[1.] if $f$ is a polynomial then $\hat{f}$ is a polynomial with a fixed degree $\left\lfloor \frac{c}{g}\right\rfloor$, where $c$ is the degree of $f|p|^2$;
  \item[2.] if $x_{0}$ is a zero of $f$ then $\hat{f}$ has a corresponding zero $y_{0}$ where $y_{0}=gx_{0}$ (mod $2\pi$);
  \item[3.] the order of the zero $y_{0}$ of $\hat{f}$ is exactly the same as the one of the zero $x_{0}$ of $f$, so that at the lower level the new projector can be easily defined in the same way.
\end{itemize}
\end{proposition}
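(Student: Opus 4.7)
My plan is to use the Fourier diagonalization of circulant matrices together with the identity \pref{decompos} of the preceding lemma. Writing $C_n(p) = F_n \Delta_n(p) F_n^H$ and invoking \pref{decompos} gives
\[
p_{n,g}^{k} = C_n(p)\, Z_{n,g}^{k} = \frac{1}{\sqrt g}\, F_n\, \Delta_n(p)\, I_{n,g}\, F_k^H.
\]
Sandwiching $C_n(f) = F_n \Delta_n(f) F_n^H$ between $(p_{n,g}^{k})^H$ and $p_{n,g}^{k}$ causes the $F_n^H F_n$ blocks to cancel and, using that the diagonal matrices $\Delta_n(\cdot)$ commute, the product collapses to
\[
(p_{n,g}^{k})^H\, C_n(f)\, p_{n,g}^{k} = \frac{1}{g}\, F_k\, I_{n,g}^{T}\, \Delta_n(f|p|^2)\, I_{n,g}\, F_k^H.
\]
So the proof reduces to showing that the inner $k\times k$ matrix $M_k := \frac{1}{g}\, I_{n,g}^T\, \Delta_n(f|p|^2)\, I_{n,g}$ is diagonal and coincides with $\Delta_k(\hat f)$ for $\hat f$ defined as in \pref{f2t}.

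The block-stacked structure of $I_{n,g}$ immediately makes $M_k$ diagonal with $j$-th entry
\[
\frac{1}{g} \sum_{\ell=0}^{g-1} (f|p|^2)\!\left( \tfrac{2\pi(j+\ell k)}{n} \right) = \frac{1}{g} \sum_{\ell=0}^{g-1} (f|p|^2)\!\left( \tfrac{x_j^{(k)}}{g} + \tfrac{2\pi\ell}{g} \right) = \hat f\bigl( x_j^{(k)} \bigr),
\]
which identifies the product with $C_k(\hat f)$; nonnegativity of $\hat f$ is automatic, being a sum of nonnegative terms. For statement~1 I would expand the trigonometric polynomial $h := f|p|^2 = \sum_{|j|\le c} a_j e^{iju}$ and apply the orthogonality identity $\frac{1}{g} \sum_{\ell=0}^{g-1} e^{2\pi i j\ell/g}$ (which equals $1$ if $g \mid j$ and $0$ otherwise) to collapse the defining sum into $\hat f(x) = \sum_{|mg|\le c} a_{mg} e^{imx}$, a trigonometric polynomial of degree $\lfloor c/g \rfloor$. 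For statement~2, evaluating at $y_0 = g x_0 \pmod{2\pi}$ yields $\hat f(y_0) = \frac{1}{g} \sum_{y \in \Omega_g(x_0)} f(y) |p|^2(y)$: the $y = x_0$ term vanishes since $f(x_0) = 0$, and for each $\hat x \in \mathcal{M}_g(x_0)$ condition \pref{p2f1} forces $p(\hat x) = 0$ (as $f(x) \to 0$ while the ratio must stay bounded), so the remaining terms vanish as well.

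Statement~3 is the step I expect to require the most delicate attention. I plan to carry out a local expansion near $y_0$: setting $x = y_0 + g t$, the sum defining $\hat f$ splits into a \emph{central} branch at $y = x_0 + t$ and $g - 1$ \emph{mirror} branches at $y = \hat x_\ell + t$ with $\hat x_\ell \in \mathcal{M}_g(x_0)$. Since $x_0$ is an isolated zero of $f$ of order $2\beta_{\textrm{min}}$ and $|p(x_0)|^2 > 0$ (guaranteed by \pref{p2f3} together with the just-established vanishing of $p$ on $\mathcal{M}_g(x_0)$), the central branch contributes $C |t|^{2\beta_{\textrm{min}}}$ with $C > 0$. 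Each mirror branch is nonnegative and of order at least $2\beta_{\textrm{min}}$ in $t$, as follows from \pref{p2f1} (which forces $|p|^2$ to vanish at each $\hat x_\ell$ to at least the same order as $f$ at $x_0$). Because every term in the sum is nonnegative, the strictly positive central contribution cannot be cancelled, and therefore $\hat f$ has a zero of order exactly $2\beta_{\textrm{min}}$ at $y_0$, matching the order of $f$ at $x_0$ and making the recursive reuse of the same projector construction possible at the coarser level.
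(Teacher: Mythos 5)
Your proof is correct, and for the central identity it takes a route that differs from the one in the paper. The paper first reduces $(p_{n,g}^{k})^{H}C_{n}(f)p_{n,g}^{k}$ to $(Z_{n,g}^{k})^{H}C_{n}(f|p|^{2})Z_{n,g}^{k}$, computes the entries of the latter directly from \pref{Z} (obtaining $a_{(gr-gs)\ \bmod\, n}$, the $g$-stride Fourier coefficients of $f|p|^{2}$), and then verifies \pref{f2t} by an integral/orthogonality computation showing that the Fourier coefficients of $\hat f$ are $b_{j}=a_{gj}$. You instead stay in the eigenvalue domain: via \pref{decompos} you write $p_{n,g}^{k}=\frac{1}{\sqrt g}F_{n}\Delta_{n}(p)I_{n,g}F_{k}^{H}$ and observe that $\frac{1}{g}I_{n,g}^{T}\Delta_{n}(f|p|^{2})I_{n,g}$ is diagonal with entries $\hat f\bigl(x_{j}^{(k)}\bigr)$; this is in the spirit of the computation the paper performs just before the proposition to show that the Galerkin matrix is again circulant, but it is not the proof given there. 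Your version buys a shorter verification of \pref{f2t} (matching eigenvalues at the coarse grid points instead of matching Fourier coefficients), while the paper's entrywise route makes the statement ``picking every $g$-th coefficient of $f|p|^{2}$'' explicit and yields item~1 with no extra work. One small ordering point: to conclude that $F_{k}\,\mathrm{diag}\bigl(\hat f(x_{j}^{(k)})\bigr)F_{k}^{H}$ \emph{is} $C_{k}(\hat f)$ you need \pref{cn} to apply to $\hat f$, i.e.\ you need $\hat f$ to be a trigonometric polynomial; you do establish this through the orthogonality computation you use for item~1 (the analogue of the paper's $b_{j}=a_{gj}$), so either perform that step before the identification or remark that circulants are uniquely determined by their eigenvalues in the Fourier basis. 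Your treatment of items~2 and~3 (vanishing of $p$ on $\mathcal{M}_g(x_{0})$ forced by \pref{p2f1}, positivity of $p^{2}(x_{0})$ from \pref{p2f3}, and the impossibility of cancellation among the nonnegative branches) coincides with the paper's argument.
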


\begin{proof} First we observe that, from $(\ref{cn})$ and $(\ref{deltan})$,
\begin{eqnarray*}
  (p_{n,g}^{k})^{H}C_{n}(f)p_{n,g}^{k}&=&(Z_{n,g}^{k})^{H}(C_{n}(p))^{H}C_{n}(f)C_{n}(p)Z_{n,g}^{k}\\
  &=&(Z_{n,g}^{k})^{H}C_{n}(f|p|^{2})Z_{n,g}^{k},
\end{eqnarray*}
thus the generating function of the circulant matrix
$(C_{n}(p))^{H}C_{n}(f)C_{n}(p)$ is $f|p|^{2}$.  Denoting by
$a_{j}$ the Fourier coefficients of $f|p|^{2}$, it holds
\begin{eqnarray*}
  C_{n}(f|p|^{2})=\left[a_{(r-s)\ {\rm mod}\,n}\right]_{r,s=0}^{n-1},
\end{eqnarray*}
and then, by $(\ref{Z})$, the entries of the matrix $(Z_{n,g}^{k})^{H}C_{n}(f|p|^{2})Z_{n,g}^{k}$ are given by
\begin{eqnarray*}
  \left[C_{n}(f|p|^{2})Z_{n,g}^{k}\right]_{r,s}&=&\sum_{\ell=0}^{n-1}[C_{n}(f|p|^{2})]_{r,\ell}[Z_{n,g}^{k}]_{\ell,s}\\
  &=&\sum_{\ell=0}^{n-1}a_{(r-\ell)\ {\rm mod}\,n}\delta_{\ell-gs}\\
  &=_{\textrm{(a)}}&a_{(r-gs)\ {\rm mod}\,n},\ \ \ r=0,\ldots,n-1,\ s=0,\ldots,k-1,\\
\end{eqnarray*}
\begin{eqnarray*}
  \left[(Z_{n,g}^{k})^{H}C_{n}(f|p|^{2})Z_{n,g}^{k}\right]_{r,s}&=&
  \sum_{\ell=0}^{n-1}[(Z_{n,g}^{k})^{H}]_{r,\ell}[C_{n}(f|p|^{2})Z_{n,g}^{k}]_{\ell,s}\\
  &=&\sum_{\ell=0}^{n-1}\delta_{\ell-gr}a_{(\ell-gs)\ {\rm mod}\,n}\\
  &=_{\textrm{(b)}}&a_{(gr-gs)\ {\rm mod}\,n}, \ \ \ r,s=0,\ldots,k-1,
\end{eqnarray*}
where (a) follows because there exists a unique $\ell\in{1,2,\ldots,n-1}$ such that $\ell-gs\equiv 0$ (mod $n$),
that is, $\ell\equiv gs$ (mod $n$) and, since $0\leq gs\leq n-1$, we obtain $\ell = gs$; similarly for (b).
Now if we denote by ${b}_{j}$ the Fourier coefficients of
$\hat{f}$ it only remains to show that ${b}_{(r-c)}=a_{(g r-g c)}$, $r,c=0,\ldots,k-1$, from which we directly infer that $[C_{n}(\hat{f})]_{r,c}={b}_{(r-c)\ {\rm mod}\,n}=a_{(g r-g c)\ {\rm mod}\,n}$. Since $f|p|^{2}$ is a polynomial, we can always write
\begin{eqnarray}\label{f2tbis}
  f|p|^{2}(x)=\sum_{\ell=-\infty}^{\infty}a_{\ell}e^{i\ell x},
  \qquad
 \hat{f}(x)=\sum_{\ell=-\infty}^{\infty}b_{\ell}e^{i\ell x}.
\end{eqnarray}

From $(\ref{coeff})$, $(\ref{f2t})$ and $(\ref{f2tbis})$, we have
\begin{eqnarray*}
  {b}_{r-c}&=&\frac{1}{2\pi}\int_{0}^{2\pi}\frac{1}{g}\sum_{j=0}^{g-1}
  \sum_{\ell=-\infty}^{+\infty}a_{\ell}e^{i\ell\left(\frac{x+2\pi j}{g}\right)}e^{-i(r-c)x}dx\\
  &=&\frac{1}{2\pi g}\int_{0}^{2\pi}\sum_{\ell=-\infty}^{+\infty}a_{\ell}\left(\sum_{j=0}^{g-1}e^{\frac{i2\pi \ell j}{g}}\right)
  e^{\frac{i\ell x}{g}}e^{-i(r-c)x}dx.
\end{eqnarray*}
Recalling that
\[
\frac{1}{g}\sum_{j=0}^{g-1}e^{\frac{i2\pi \ell j}{g}}=
\left\{\begin{array}{l@{\quad}l} 1 & \textrm{ if }\ell=g t\\ 0
&\textrm{otherwise} \end{array}\right.
\qquad
\textrm{and}
\qquad
\frac{1}{2\pi}\int_{0}^{2\pi} e^{i\ell x} dx=
\left\{\begin{array}{l@{\quad}l} 1 & \textrm{ if }\ell=0\\ 0
&\textrm{otherwise} \end{array}\right.,\]
we find that
\begin{eqnarray}\label{hata}
  \nonumber {b}_{r-c}&=&\frac{1}{2\pi g}\int_{0}^{2\pi}\sum_{t=-\infty}^{+\infty}a_{g t}
  g e^{\frac{ig t x}{g}}e^{-i(r-c)x}dx\\
  \nonumber&=&\sum_{t=-\infty}^{+\infty}a_{g t}\frac{1}{2\pi}\int_{0}^{2\pi}e^{ix(t-(r-c))}dx\\
  &=&a_{g(r-c)}.
\end{eqnarray}

From the expression of $\hat{f}$, since $f(x_{0})=0$ then it must hold
$p(y)=0$ $\forall y \in \mathcal{M}_g(x_{0})$ to satisfy the \eqref{p2f1}.
Thus $y_{0}=gx_{0}$ (mod $2\pi$) is a zero of $\hat{f}$ (i.e. item 2. is proved).

Moreover, by $(\ref{p2f3})$, we deduce that $p^{2}(x_{0})>0$
since $p^{2}(y)=0$, $\forall y \in \mathcal{M}_g(x_{0})$,
and the order of the zero $y_{0}$ of $f\left(\frac{x}{g}\right)|p|^{2}\left(\frac{x}{g}\right)$
is the same as the order of $f(x)$ at $x_{0}$.
Furthermore, by $(\ref{p2f1})$ we can see that $|p|^{2}\left(\frac{x+2\pi k}{g}\right)$ has at $y_{0}$
a zero of order at least equal to the one of $f(x)$ at $x_{0}$, for any $k=1,\ldots,g-1$.
Since all the contributions in $\hat{f}$ are nonnegative the thesis of item 3. follows.

Finally we have to prove item 1. Since ${b}_{j}$ are the Fourier coefficients of $\hat{f}$ and
$a_{j}$ are the Fourier coefficients of the polynomial $f|p|^{2}$, see $(\ref{f2tbis})$, from $(\ref{hata})$
we deduce that
\begin{eqnarray*}
  \hat{f}(x)=\sum_{j}{b}_{j}e^{ijx}=\sum_{j}a_{gj}e^{ijx}.
\end{eqnarray*}

Hence, if the polynomial $f|p|^{2}$ has degree $c$, $\hat{f}$ has degree at most $\left\lfloor \frac{c}{g}\right\rfloor$. $\hfill\square$
\end{proof}

\section{Proof of convergence}\label{proof:circ}
Using the results in Section \ref{cutting:circ}, it is possible to prove the optimality of
the TGM and of the W-cycle (W-cycle requires $g>2$).

\subsection{TGM convergence}\label{sect:TGMconv}
The smoothing property for $g=2$ was proved in \cite{mcirco}
and it holds unchanged also for $g>2$.

\begin{lemma}[\cite{mcirco}] \label{lm:smooth}
Let $A_{n}:=C_{n}(f)$ with $f$ being a nonnegative trigonometric polynomial
(not identically zero) and let $V_{n}:=I_{n}-\omega A_{n}$,
$0<\omega<2/\|f\|_{\infty}$.  If we choose $\alpha_{\rm{post}}$ so
that $\alpha_{\rm{post}}\leq a_{0}\omega(2-\omega\|f\|_{\infty})$
then relation $(\ref{cond1})$ holds true.
\end{lemma}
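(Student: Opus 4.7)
The plan is to reduce the smoothing inequality $(\ref{cond1})$ to a pointwise condition on the eigenvalues of $A_n$ via the simultaneous Fourier diagonalization of $A_n$ and $V_n$. The key initial observation is that the main diagonal of $C_n(f)$ is the scalar matrix $a_0 I_n$, where $a_0 = \frac{1}{2\pi}\int_Q f(u)\,du$, and that $a_0 > 0$ because $f$ is nonnegative and not identically zero. Hence $D_n^{-1} = a_0^{-1} I_n$ and $A_n D_n^{-1} A_n = a_0^{-1} A_n^2$, which commutes with $A_n$ and with $V_n$.

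Next, expand both sides. Since $A_n$ is Hermitian positive semidefinite and $V_n = I_n - \omega A_n$ commutes with $A_n$, we have
\begin{equation*}
V_n^H A_n V_n = A_n - 2\omega A_n^2 + \omega^2 A_n^3.
\end{equation*}
Substituting into $(\ref{cond1})$ and cancelling the term $x_n^H A_n x_n$, the inequality becomes
\begin{equation*}
\omega^2\, x_n^H A_n^3 x_n \;\leq\; \Bigl(2\omega - \frac{\alpha_{\rm post}}{a_0}\Bigr)\, x_n^H A_n^2 x_n, \qquad \forall x_n \in \mathbb{C}^n.
\end{equation*}

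Then I would diagonalize using $(\ref{cn})$: writing $A_n = F_n \Delta_n(f) F_n^H$ with eigenvalues $\lambda_j = f(x_j^{(n)}) \in [0,\|f\|_\infty]$, and setting $y = F_n^H x_n$, the displayed inequality becomes
\begin{equation*}
\sum_{j=0}^{n-1} \lambda_j^2 \Bigl(\omega^2 \lambda_j - 2\omega + \frac{\alpha_{\rm post}}{a_0}\Bigr) |y_j|^2 \leq 0.
\end{equation*}
A sufficient condition is that each bracket be nonpositive whenever $\lambda_j > 0$, that is $\alpha_{\rm post}/a_0 \leq \omega(2 - \omega \lambda_j)$. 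Since $0 \leq \lambda_j \leq \|f\|_\infty$ and $0 < \omega < 2/\|f\|_\infty$, the minimum of $\omega(2-\omega\lambda_j)$ over $j$ is attained (in the worst case) at $\lambda_j = \|f\|_\infty$ and equals $\omega(2 - \omega\|f\|_\infty) > 0$. Hence the hypothesis $\alpha_{\rm post} \leq a_0 \omega(2 - \omega \|f\|_\infty)$ guarantees the pointwise inequality, and therefore $(\ref{cond1})$.

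There is no real obstacle here: the argument is the standard Richardson smoothing analysis, and the only thing to keep track of is that the diagonal scaling $D_n^{-1}$ collapses to the scalar $a_0^{-1}$ precisely because circulant matrices have constant main diagonal; this is what makes the quantity $a_0$ appear explicitly in the admissible range for $\alpha_{\rm post}$. Note also that the argument is independent of $g$, so the statement from \cite{mcirco} carries over verbatim to the $g$-circulant based projection framework used in this paper.
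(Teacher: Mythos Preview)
Your argument is correct and is exactly the standard Richardson smoothing computation: reduce $(\ref{cond1})$ to the matrix inequality $V_n^H A_n V_n \leq A_n - (\alpha_{\rm post}/a_0) A_n^2$ using $D_n = a_0 I_n$, then diagonalize via $(\ref{cn})$ and bound each eigenvalue $\lambda_j = f(x_j^{(n)}) \in [0,\|f\|_\infty]$. The paper does not reprove this lemma at all; it is quoted verbatim from \cite{mcirco} and only the remark that it is independent of $g$ is added, which you also observe.
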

If in the previous Lemma we choose $\omega=\|f\|_{\infty}^{-1}$,
then $\alpha_{\rm{post}}\leq\|f\|_{1}/\|f\|_{\infty}$ and the best value of
$\alpha_{\rm{post}}$ is $\alpha_{\rm{post,best}}=\|f\|_{1}/\|f\|_{\infty}$.
Moreover, the result of Lemma \ref{lm:smooth} can be easily generalized when
considering both pre-smoothing and post-smoothing as in \cite{AD}.

The following result shows that TGM conditions \eqref{p2f1} and \eqref{p2f3}
are sufficient in order to satisfy the approximation property.

\begin{theorem} \label{th:tgmopt}
Let $A_{n}:=C_{n}(f)$ with $f$ being a nonnegative trigonometric polynomial
(not identically zero) and let $p_{n,g}^{k}=C_{n}(p)Z_{n,g}^{k}$ the
projector operator,  with $Z_{n,g}^{k}$ defined in $(\ref{Z})$ and
with $p$ trigonometric polynomial satisfying condition $(\ref{p2f1})$ for any
zero of $f$ and globally the condition
$(\ref{p2f3})$. Then, there exists a positive value $\gamma$
independent of $n$ such that inequality $(\ref{cond3})$ holds true.
\end{theorem}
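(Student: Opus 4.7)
The plan is to verify the approximation property by diagonalizing everything in the Fourier basis and reducing the minimization to a collection of scalar least-squares problems, one per coarse frequency; this is the $g$-fold generalization of the $g=2$ argument of \cite{mcirco}. First I would exploit that $D_n=a_0 I_n$ with $a_0=\|f\|_1>0$ (all diagonal entries of $C_n(f)$ coincide), so that $(\ref{cond3})$ becomes $\min_y\|x_n-p_{n,g}^k y\|_2^2\le(\gamma/a_0)\|x_n\|_{A_n}^2$. Setting $\hat x:=F_n^H x_n$ and $\hat y:=F_k^H y$, the identity $(\ref{decompos})$ combined with $C_n(p)=F_n\Delta_n(p)F_n^H$ gives $F_n^H p_{n,g}^k y=\tfrac{1}{\sqrt g}\Delta_n(p)I_{n,g}\hat y$. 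Reindexing $i=qk+j$ with $j\in\{0,\dots,k-1\}$ and $q\in\{0,\dots,g-1\}$, the grid $\{x_{qk+j}^{(n)}\}_q$ is exactly the orbit $\Omega_g(x_j^{(n)})$ (since $x_{qk+j}^{(n)}=x_j^{(n)}+2\pi q/g$), and the minimization decouples into $k$ independent one-variable complex problems, one per $\hat y_j$.

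For each such scalar problem Lagrange's identity gives a residual equal to
\begin{equation*}
\frac{\sum_{0\le q<r\le g-1}\bigl|p(y_{q,j})\hat x_{rk+j}-p(y_{r,j})\hat x_{qk+j}\bigr|^2}{\sum_{s=0}^{g-1}p^2(y_{s,j})},\qquad y_{q,j}:=x_j^{(n)}+\tfrac{2\pi q}{g},
\end{equation*}
well-defined thanks to $(\ref{p2f3})$. Using the elementary inequality $|p_q a_r-p_r a_q|^2\le 2(p_q^2|a_r|^2+p_r^2|a_q|^2)$, summing over $j$, and recalling $\|x_n\|_{A_n}^2=\sum_i f(x_i^{(n)})|\hat x_i|^2$, the problem reduces to the purely analytic, $n$-independent statement that the function
\begin{equation*}
R_{q,r}(x):=\frac{p^2(x+2\pi q/g)}{f(x+2\pi r/g)\,\sum_{s=0}^{g-1}p^2(x+2\pi s/g)},\qquad q\ne r,
\end{equation*}
is globally bounded on $[0,2\pi)$; any such bound $M$ yields an admissible $\gamma=2 a_0 M$, up to a factor depending only on $g$.

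The main obstacle is establishing this global bound. Away from any neighborhood of the (finite) zero set of the nonnegative trigonometric polynomial $f$, the denominator factor $f$ is bounded below, $p^2$ is bounded above, and $\sum_s p^2$ is bounded below by $(\ref{p2f3})$, so $R_{q,r}$ is trivially controlled there. Near a zero $x_0$ of $f$, letting $\xi:=x+2\pi r/g\to x_0$ one has $x+2\pi q/g=\xi+2\pi(q-r)/g\in\mathcal{M}_g(\xi)$, so condition $(\ref{p2f1})$ asserts exactly that $p^2(\xi+2\pi(q-r)/g)/f(\xi)$ stays finite in this limit, while $(\ref{p2f3})$ keeps the $\sum_s p^2$ factor away from zero. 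A standard compactness argument on the torus then patches these local bounds into a global constant $M$, thus producing a $\gamma$ independent of $n$ and completing the proof.
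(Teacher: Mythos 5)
Your proposal is correct and follows essentially the same route as the paper's proof: after observing $D_{n}=a_{0}I_{n}$, both arguments use the Fourier diagonalization together with \eqref{decompos} to decouple the least-squares residual into $g$-dimensional blocks indexed by the coarse frequencies (the orbits $\Omega_g(x_{j}^{(n)})$), and then settle the resulting $n$-independent bound by exactly the TGM conditions \eqref{p2f1} and \eqref{p2f3}. The only difference is presentational: you write out the per-block residual via Lagrange's identity and bound it with an elementary inequality, whereas the paper states the same block inequality as $I_g-p[\mu](p[\mu])^T/\|p[\mu]\|_2^2\leq(\gamma/a_{0})\,{\rm diag}(f[\mu])$ and invokes Sylvester's inertia law, arriving at the comparable constant $\gamma\geq g(g-1)a_{0}hz$.
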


\begin{proof} The proof is similar to that of Lemma 8.2 in \cite{Sun},
but we report it here for completeness.
First, we recall that the main diagonal
of $A_{n}$ is given by $D_{n}=a_{0}I_{n}$ with
$a_{0}=(2\pi)^{-1}\int_{Q}f=\|f\|_{1}>0$,  so that
$\|\cdot\|_{D_{n}}^{2}=a_{0}\|\cdot\|_{2}^{2}$.

In order to prove that there exists $\gamma>0$ independent of $n$
such that for any $x_{n}\in\mathbb{C}^{n}$
\begin{eqnarray*}
  \min_{y\in\mathbb{C}^{k}}\|x_{n}-p_{n,g}^{k}y\|_{D_{n}}^{2}=
  a_{0}\min_{y\in\mathbb{C}^{k}}\|x_{n}-p_{n,g}^{k}y\|_{2}^{2}
  \leq \gamma\|x_{n}\|_{A_{n}}^{2},
\end{eqnarray*}
we chose a special instance of $y$ in such a way that the
previous inequality  is reduced to a matrix inequality in the sense
of the partial ordering of the real space of the Hermitian matrices.
For any $x_{n}\in\mathbb{C}^{n}$, let
$\overline{y}\equiv\overline{y}(x_{n})\in\mathbb{C}^{k}$ be
defined as
\begin{eqnarray*}
  \overline{y}=\left[(p_{n,g}^{k})^{H}p_{n,g}^{k}\right]^{-1}(p_{n,g}^{k})^{H}x_{n}.
\end{eqnarray*}

Therefore, $(\ref{cond3})$ is implied by
\begin{eqnarray*}
  \|x_{n}-p_{n,g}^{k}\overline{y}\|_{2}^{2}\leq(\gamma/a_{0})\|x_{n}\|_{A_{n}}^{2},\qquad
  \forall x_{n}\in\mathbb{C}^{n},
\end{eqnarray*}
where the latter is equivalent to the matrix inequality
\begin{equation}\label{eq:W2}
  W_{n}(p)^{H}W_{n}(p)\leq(\gamma/a_{0})C_{n}(f),
\end{equation}
with $W_{n}(p)=I-p_{n,g}^{k}\left[(p_{n,g}^{k})^{H}p_{n,g}^{k}\right]^{-1}(p_{n,g}^{k})^{H}$.
Since, by construction, $W_{n}(p)$ is an Hermitian unitary projector,
it holds that $W_{n}(p)^{H}W_{n}(p)=W_{n}^{2}(p)=W_{n}(p)$.
As a consequence, inequality \eqref{eq:W2} can be rewritten as
\begin{eqnarray}\label{teocond1}
  W_{n}(p)\leq(\gamma/a_{0})C_{n}(f).
\end{eqnarray}

If $k=\frac{n}{g}\in\mathbb{N}$, following the decomposition in $(\ref{decompos})$,
$p_{n,g}^{k}=C_{n}(p)Z_{n,g}^{k}$ can be expressed according
to
\begin{eqnarray*}
  (p_{n,g}^{k})^{H}=\frac{1}{\sqrt{g}}F_{k}\left[\Delta_{p}^{(0)}|\Delta_{p}^{(1)}|\cdots|\Delta_{p}^{(g-1)}\right]F_{n}^{H},
\end{eqnarray*}
where
\begin{eqnarray*}
  \Delta_{p}^{(r)}= \begin{smallmatrix}\vspace{-0.5ex}\textrm{\normalsize diag}\\\vspace{-0.8ex}j=0,\ldots,k-1\end{smallmatrix}
  \left(p(x_{rk+j,n}^{(n)}) \right), \qquad r=0,\dots,g-1,
\end{eqnarray*}
with $x_{j}^{(n)}=2\pi j/n$.

Let $p[\mu] \in \mathbb{C}^{g}$ whose entries are given
by the evaluations of $p$ over the points of $\Omega(x_\mu^{(n)})$, for $\mu =0,\dots, k-1$.
There exists a suitable permutation by rows and columns of $F_{n}^{H}W_{n}(p)F_{n}$,
such that we can obtain a $g\times g$ block diagonal matrix whose
$\mu$th diagonal block is given by
$
I_g-p[\mu](p[\mu])^T/\|p[\mu]\|_2^2.
$
Therefore, using the same notation for $f[\mu]$ and denoting by ${\rm diag}(f[\mu])$
the diagonal matrix having the vector $f[\mu]$ on the main diagonal,
the condition $(\ref{teocond1})$ is equivalent to
\begin{equation}\label{teocond1block}
    I_g - \frac{p[\mu](p[\mu])^T}{\|p[\mu]\|_2^2} \leq (\gamma/a_{0}) {\rm diag}(f[\mu]),
\end{equation}
for $\mu =0,\dots, k-1$.
By the Sylvester inertia law \cite{GV}, the relation \eqref{teocond1block} is satisfied if
every entries of
\[
    {\rm diag}(f[\mu])^{-1/2}\left(I_g-\frac{p[\mu](p[\mu])^T}{\|p[\mu]\|_2^2} \right){\rm diag}(f[\mu])^{-1/2}
\]
is bounded in modulus by a constant, which follows from the TGM conditions \eqref{p2f1} and \eqref{p2f3}.

Furthermore, if we put
\begin{eqnarray*}
  z&=&\max_{y\in\Omega_g(x)}\left\|\frac{p^{2}(y)}{f(x)}\right\|_{\infty},\\
  h&=&\left\|\frac{1}{\sum_{y\in\Omega_g(x)}p^{2}(y)}\right\|_{\infty},
\end{eqnarray*}
the condition $(\ref{cond3})$  is satisfied choosing a value of $\gamma$ such that
$\gamma\geq g(g-1)a_{0}hz$. $\hfill\square$
\end{proof}

Combining Lemma \ref{lm:smooth} and Theorem \ref{th:tgmopt} with
Theorem \ref{teoconv}, it follows that the TGM convergence speed
does not depend on the size of the linear system.

\subsection{Multigrid convergence}

The optimal TGM convergence rate proved in Theorem \ref{th:tgmopt} can be extended
to a generic recursion level of the multigrid procedure
obtaining the so called ``level independency'' property.
The key tools to do that are the Proposition \ref{fhat} and an explicit
choice of the projector, considering for instance the symbol $p$ in \eqref{polybis}.
Indeed, the ``level independency'' was already proved in literature for $g=2$
(see \cite{CCS,Chan-Sun2,ADS}) and the proof can be extended to $g>2$, as 
in Theorem~\ref{th:tgmopt}.

The ``level independency'' implies that the W-cycle has a constant convergence rate
independent of the problem size \cite{Trot}.
However, the fact that the convergence speed does not depend on the size of the linear system
does not implies the optimality of the method, because the computational work
at each iteration is not taken into account. 
For estimating the computational work at each iteration of a multigrid method, we have to consider
the size of the coarse problem and the number $\theta$ of recursive calls.
In our case the size of the problem at the level $i$ is $n_i = gn_{i-1}$.
According to the analysis in \cite{Trot}, we assume that the multigrid components
(smoothing, projection, \dots)  require a number of arithmetic operations which is
$cn_i$, with $c$ constant independent of $n_i$, up to lower order term.
From equation $(2.4.14)$ in \cite{Trot}, the total computational work $C_m$
of one complete multigrid cycle is
\begin{equation}\label{eq:cost}
    C_m \doteq \left\{
    \begin{array}{l@{\qquad}l}
      \frac{g}{g-\theta}cn & \textrm{for }\, \theta<g \\
      O(n\log n) & \textrm{for } \, \theta=g
    \end{array}
    \right. \,,
\end{equation}
where the symbol $\doteq$ means equality up to lower order terms.
It follows that for $g=2$ the W-cycle can not be optimal  even in the presence of
``level independency'', because each multigrid iteration requires a computational cost
of $O(n\log n)$ while the matrix vector product is of $O(n)$. On the other hand,
for $g=3$ the W-cycle has $C_m\doteq3cn$ and hence it is optimal if the
``level independency'' is satisfied. More in general, the proposed multigrid
will be optimal for a number $\theta \in \mathbb{N}$ of recursive calls 
such that $1<\theta<g$.

\subsection{Some pathologies eliminated when using our algorithm}\label{pathology:circ}

From conditions $(3.4)$ and $(3.5)$ in \cite{mcirco}, we know that,
for $g=2$, if $x_{0}$ is a zero of $f$, then $f(x_{0}+\pi)$ must be
positive: otherwise relationship $(3.5)$ in \cite{mcirco} cannot be
satisfied with any polynomial $p$. But if we consider $g=3$ this is
no longer a problem, because conditions $(\ref{p2f1})$
 and $(\ref{p2f3})$ impose that, if $x_{0}$ is a zero
of $f$, then $f\left(x_{0}+\frac{2}{3}\pi\right)$ and $f\left(x_{0}+\frac{4}{3}\pi\right)$ must be
positive, while there are no conditions on $f(x_{0}+\pi)$.

For $g=3$, if $f$ has a unique zero $x_{0}\in[0,2\pi)$ of finite order,
then we consider $\hat{x}=x_{0}+\frac{2}{3}\pi$ (mod $2\pi$) and
$\tilde{x}=x_{0}+\frac{4}{3}\pi$ (mod $2\pi$) and we set
$P_{n}=C_{n}(p)$ where $p$ is a trigonometric polynomial defined as
\begin{eqnarray}\label{poly}
  &&\\
  &&\nonumber p(x)=(2-2\cos(x-\hat{x}))^{\left\lceil \beta/2\right\rceil}(2-2\cos(x-\tilde{x}))^{\left\lceil \beta/2\right\rceil}\sim
  |x-\hat{x}|^{2\left\lceil \beta/2\right\rceil}|x-\tilde{x}|^{2\left\lceil \beta/2\right\rceil},
\end{eqnarray}
for $x\in[0,2\pi)$, with
\begin{eqnarray*}
  \beta\geq\beta_{\textrm{min}}=\min\left\{i\left|\lim_{x\rightarrow x_{0}}
  \frac{|x-x_{0}|^{2i}}{f(x)}<+\infty\right.\right\},
\end{eqnarray*}
thus conditions $(\ref{p2f1})$  and $(\ref{p2f3})$ are
satisfied.  If $f$ shows more than one zero in $[0,2\pi)$, then we
consider a polynomial $p$ which is the product of the basic
polynomials of kind $(\ref{poly})$, satisfying the condition
$(\ref{p2f1})$ for any single zero and globally
the condition \nolinebreak $(\ref{p2f3})$.

\begin{example}\label{ex:double}
The symbol
\begin{eqnarray*}
  f(x)=(2-2\cos(x))(2+2\cos(x))
\end{eqnarray*}
vanishes at zero and $\pi$ with order two.
For $g=3$, we have
$\mathcal{M}_3(0) = \{\frac{2\pi}{3},\,\frac{4\pi}{3}\}$ and
$\mathcal{M}_3(\pi) = \{\frac{5\pi}{3},\,\frac{7\pi}{3}\}$,
thus the trigonometric polynomial
\begin{equation}\label{eq:pdouble}
  p(x)=\prod_{\hat{x} \in \mathcal{M}_3(0) \bigcup \mathcal{M}_3(\pi)}(2-2\cos(x-\hat{x}))
\end{equation}
satisfies the TGM conditions \eqref{p2f1} and \eqref{p2f3}
and defines an optimal TGM.
\end{example}

\section{Numerical experiments}\label{sec:num}

In this section, we apply the proposed multigrid method to symmetric positive definite
circulant and Toeplitz systems $A_{n}x=b$.
We choose as solution the vector $x$ such that $x_{i}=i/n$, $i=1,\ldots,n$.
The right-hand side vector $b$ is obtained accordingly.
As smoother, we use Richardson with $\omega_{j}=1/\|f_{j}\|_\infty$, for $j=0,\ldots,m-1$ ($m$ is number of subgrids in the algorithm, $m=1$ for the TGM), for pre-smoother and the conjugate gradient for post-smoother. In the V-cycle and W-cycle procedure when the coarse grid size is less than or equal to 27, we solve the coarse grid system exactly. The zero vector is used as the initial guess and the stopping criterion is $\|r_{q}\|_2/\|r_{0}\|_2\leq 10^{-7}$, where $r_{q}$ is the residual vector after $q$ iterations and $10^{-7}$ is the given tolerance.

\subsection{Cutting operators for Toeplitz matrices}
When dealing with circulant matrices, using the projector defined in $(\ref{procirc})$, 
the matrix at the lower level is still a circulant matrix, while for Toeplitz matrices, if we consider $A_{n}:=T_{n}(f)$ and $p_{n,3}^{k}=T_{n}(p)Z_{n,3}^{k}$, where $p$ is defined in accordance with the formula $(\ref{poly})$ and $k=\frac{n}{3}\in\mathbb{N}$, we find that
\begin{eqnarray*}
  T_{n}(p)T_{n}(f)T_{n}(p)=T_{n}(fp^{2})+G_{n}(f,p).
\end{eqnarray*}
Furthermore, if $2\beta+1$ is the bandwidth of $T_{n}(p)$, the matrix $G_{n}(f,p)$ has rank $2\beta$ and is formed
by a matrix of rank $\beta$ in the upper left corner and a matrix of the same rank in the bottom right corner. 
According to the proposal in \cite{ADS},
 we take a cutting matrix that will completely erase the contribution of $G_{n}(f,p)$, so that, at the lower level, the restriction of the matrix $T_{n}(p)T_{n}(f)T_{n}(p)$ is still a Toeplitz matrix and thus we can recursively apply the algorithm. The proposed cutting matrix is as follows:
\begin{eqnarray*}
  \widetilde{Z}_{n,3}^{k}=\left[\begin{array}{c}
  0_{\beta}^{k-r}\\
  Z_{n-2\beta,3}^{k-r}\\
  0_{\beta}^{k-r}\\
  \end{array}\right]_{n\times k-r}\qquad r=\frac{2(\beta-1)}{3},
\end{eqnarray*}
where $0_{\beta}^{k-r}$ is the zero matrix of size $\beta\times(k-r)$; $\widetilde{Z}_{n,3}^{k}$ has the first and the last $\beta$ rows equal to zero and therefore it is able to remove corrections of rank less than or equal to $2\beta$. Since $G_{n}(f,p)$ has rank $2\beta$, we deduce that $A_{k-r}=(\widetilde{Z}_{n,3}^{k})^{H}T_{n}(p)T_{n}(f)T_{n}(p)\widetilde{Z}_{n,3}^{k}$ is Toeplitz and we cannot obtain a Toeplitz matrix of size greater than this. As a consequence, for Toeplitz matrices, the projector is then defined as
\begin{eqnarray*}
  p_{n,3}^{k}=T_{n}(p)\widetilde{Z}_{n,3}^{k}.
\end{eqnarray*}

Also the size of the problem should be chosen in such a way that a recursive application of the algorithm
is possible; in our case, if we choose $n=3^{\alpha}-\xi$ with $\xi=\beta-1$, then the size of the problem at the lower level becomes $k'=k-r=\frac{n-2(\beta-1)}{3}=\frac{3^{\alpha}-(\beta-1)-2(\beta-1)}{3}=3^{\alpha-1}-(\beta-1)=3^{\alpha-1}-\xi$.

\subsection{Zero at the origin and at $\pi$.}
We present some examples where the generating functions $f_{0}$ vanish at the origin and at $\pi$.
Firstly, we consider the Example \ref{ex:double} where the symbol
\begin{eqnarray*}
  f_{0}(x)=(2-2\cos(x))(2+2\cos(x)),
\end{eqnarray*}
vanishes at $0$ and $\pi$ with order $2$.
According to $(\ref{poly})$, we choose the projector $p_{n,3}^{k}=C_{n}(p_{0})Z_{n,3}^{k}$ if $A_{n}$ is a circulant matrix and $p_{n,3}^{k}=T_{n}(p_{0})\widetilde{Z}_{n,3}^{k}$ if $A_{n}$ is a Toeplitz matrix, where
$p_0=p$ defined in \eqref{eq:pdouble}.
Fixing $x_{0}^{(1)}=0$ and $x_{0}^{(2)}=\pi$, the position of the new zeros $x_{k}^{(j)}$, for $k=1,2,\ldots,$ $m-1$ with $j=1,2$, move according to Proposition \ref{fhat} and, in this case, the functions $p_{k}$ are equal to $p$ for every level $k$.
Tables \ref{tab1} and \ref{tab2} report the number of iterations required for convergence in the case of circulant and Toeplitz systems, respectively. In all cases we note an optimal behavior at exception of the V-cycle for Toeplitz matrices where the number of iterations slightly grows with the size $n$.
\begin{table}
\caption{Circulant case: $f_{0}(x)=(2-2\cos(x))(2+2\cos(x))$.}\label{tab1}
\begin{center}
\begin{tabular}{lcccccc}
\hline\noalign{\smallskip}
 n & \multicolumn{6}{c}{$\#$ iterations}\\
\hline
  & \multicolumn{2}{c}{Two-grid} & \multicolumn{2}{c}{V-cycle} & \multicolumn{2}{c}{W-cycle}\\
  & $\nu_{\rm{pre}}=$ & $\nu_{\rm{pre}}=$ & $\nu_{\rm{pre}}=$ & $\nu_{\rm{pre}}=$ & $\nu_{\rm{pre}}=$ & $\nu_{\rm{pre}}=$\\
  & $\nu_{\rm{post}}=1$ & $\nu_{\rm{post}}=2$ & $\nu_{\rm{post}}=1$ & $\nu_{\rm{post}}=2$ & $\nu_{\rm{post}}=1$ & $\nu_{\rm{post}}=2$\\
\noalign{\smallskip}\hline\noalign{\smallskip}
$3^{4}=81$ & 11 & 6 & 11 & 6 & 11 & 6\\
$3^{5}=243$ & 11 & 6 & 11 & 7 & 11 & 6\\
$3^{6}=729$ & 11 & 6 & 11 & 7 & 11 & 6\\
$3^{7}=2187$ & 11 & 6 & 11 & 7 & 11 & 6\\
\noalign{\smallskip}\hline
\end{tabular}
\end{center}
\end{table}
\begin{table}
\caption{Toeplitz case. $f_{0}(x)=(2-2\cos(x))(2+2\cos(x))$}\label{tab2}
\begin{center}
\begin{tabular}{lcccccc}
\hline\noalign{\smallskip}
 n & \multicolumn{6}{c}{$\#$ iterations}\\
\hline
  & \multicolumn{2}{c}{Two-grid} & \multicolumn{2}{c}{V-cycle} & \multicolumn{2}{c}{W-cycle}\\
  & $\nu_{\rm{pre}}=$ & $\nu_{\rm{pre}}=$ & $\nu_{\rm{pre}}=$ & $\nu_{\rm{pre}}=$ & $\nu_{\rm{pre}}=$ & $\nu_{\rm{pre}}=$\\
  & $\nu_{\rm{post}}=1$ & $\nu_{\rm{post}}=2$ & $\nu_{\rm{post}}=1$ & $\nu_{\rm{post}}=2$ & $\nu_{\rm{post}}=1$ & $\nu_{\rm{post}}=2$\\
\noalign{\smallskip}\hline\noalign{\smallskip}
$3^{4}-3=78$ & 24 & 14 & 24 & 14 & 24 & 14\\
$3^{5}-3=240$ & 24 & 15 & 35 & 20 & 28 & 16\\
$3^{6}-3=726$ & 24 & 15 & 43 & 24 & 29 & 16\\
$3^{7}-3=2184$ & 24 & 15 & 49 & 27 & 29 & 16\\
\noalign{\smallskip}\hline
\end{tabular}
\end{center}
\end{table}

In the second example we increase the order of the zero in $\pi$ considering the function
\[  f_{0}(x)=(2-2\cos(x))(2+2\cos(x))^{2}.\]
which has a zero at 0 with order 2 and one at $\pi$ with order 4.
The polynomial $p_{0}=p$ defined in \eqref{eq:pdouble} still satisfies the TGM conditions \eqref{p2f1} and  \eqref{p2f3}.
The functions $p_{k}$ do not change at the lower levels.
In Tables \ref{tab3} and \ref{tab4} we report the number of iterations required for convergence in the case of circulant and Toeplitz systems, respectively.
Since $f_0$ has a zero of order 4 the condition number of $A_{3^7} = O((3^7)^4) = O(10^{13})$.
Therefore, using double precision, for this example we choose a tolerance equal to $10^{-3}$.
This choice agrees also with the plots in Figures \ref{im1} and \ref{im2} where we note an optimal reduction of the
residual norm only until about $10^{-3}$.

\begin{table}
\caption{Circulant case. $f_{0}(x)=(2-2\cos(x))(2+2\cos(x))^2$, tolerance=$10^{-3}$}\label{tab3}
\begin{center}
\begin{tabular}{lcccccc}
\hline\noalign{\smallskip}
 n & \multicolumn{6}{c}{$\#$ iterations}\\
\hline
  & \multicolumn{2}{c}{Two-grid} & \multicolumn{2}{c}{V-cycle} & \multicolumn{2}{c}{W-cycle}\\
  & $\nu_{\rm{pre}}=$ & $\nu_{\rm{pre}}=$ & $\nu_{\rm{pre}}=$ & $\nu_{\rm{pre}}=$ & $\nu_{\rm{pre}}=$ & $\nu_{\rm{pre}}=$\\
  & $\nu_{\rm{post}}=1$ & $\nu_{\rm{post}}=2$ & $\nu_{\rm{post}}=1$ & $\nu_{\rm{post}}=2$ & $\nu_{\rm{post}}=1$ & $\nu_{\rm{post}}=2$\\
\noalign{\smallskip}\hline\noalign{\smallskip}
$3^{4}=81$   & 20 & 9 & 20 & 9   & 20  & 9 \\
$3^{5}=243$  & 20 & 9 & 18 & 9   & 20  & 9 \\
$3^{6}=729$  & 20 & 9 & 18 & 9   & 20  & 9 \\
$3^{7}=2187$ & 20 & 9 & 18 & 9   & 20  & 9 \\
\noalign{\smallskip}\hline
\end{tabular}
\end{center}
\end{table}
\begin{table}
\caption{Toeplitz case. $f_{0}(x)=(2-2\cos(x))(2+2\cos(x))^2$, tolerance=$10^{-3}$}\label{tab4}
\begin{center}
\begin{tabular}{lcccccc}
\hline\noalign{\smallskip}
 n & \multicolumn{6}{c}{$\#$ iterations}\\
\hline
  & \multicolumn{2}{c}{Two-grid} & \multicolumn{2}{c}{V-cycle} & \multicolumn{2}{c}{W-cycle}\\
  & $\nu_{\rm{pre}}=$ & $\nu_{\rm{pre}}=$ & $\nu_{\rm{pre}}=$ & $\nu_{\rm{pre}}=$ & $\nu_{\rm{pre}}=$ & $\nu_{\rm{pre}}=$\\
  & $\nu_{\rm{post}}=1$ & $\nu_{\rm{post}}=2$ & $\nu_{\rm{post}}=1$ & $\nu_{\rm{post}}=2$ & $\nu_{\rm{post}}=1$ & $\nu_{\rm{post}}=2$\\
\noalign{\smallskip}\hline\noalign{\smallskip}
$3^{4}-3=78$   & 50 & 31 &  50    &  31     & 50  & 31  \\
$3^{5}-3=240$  & 48 & 31 &  93    &  35     & 72  & 32  \\
$3^{6}-3=726$  & 47 & 31 &  74    &  34     & 68  & 31  \\
$3^{7}-3=2184$ & 47 & 31 &  76    &  34     & 68  & 31  \\
\noalign{\smallskip}\hline
\end{tabular}
\end{center}
\end{table}

\begin{figure}
    \centering
    \includegraphics[width=0.49\textwidth]{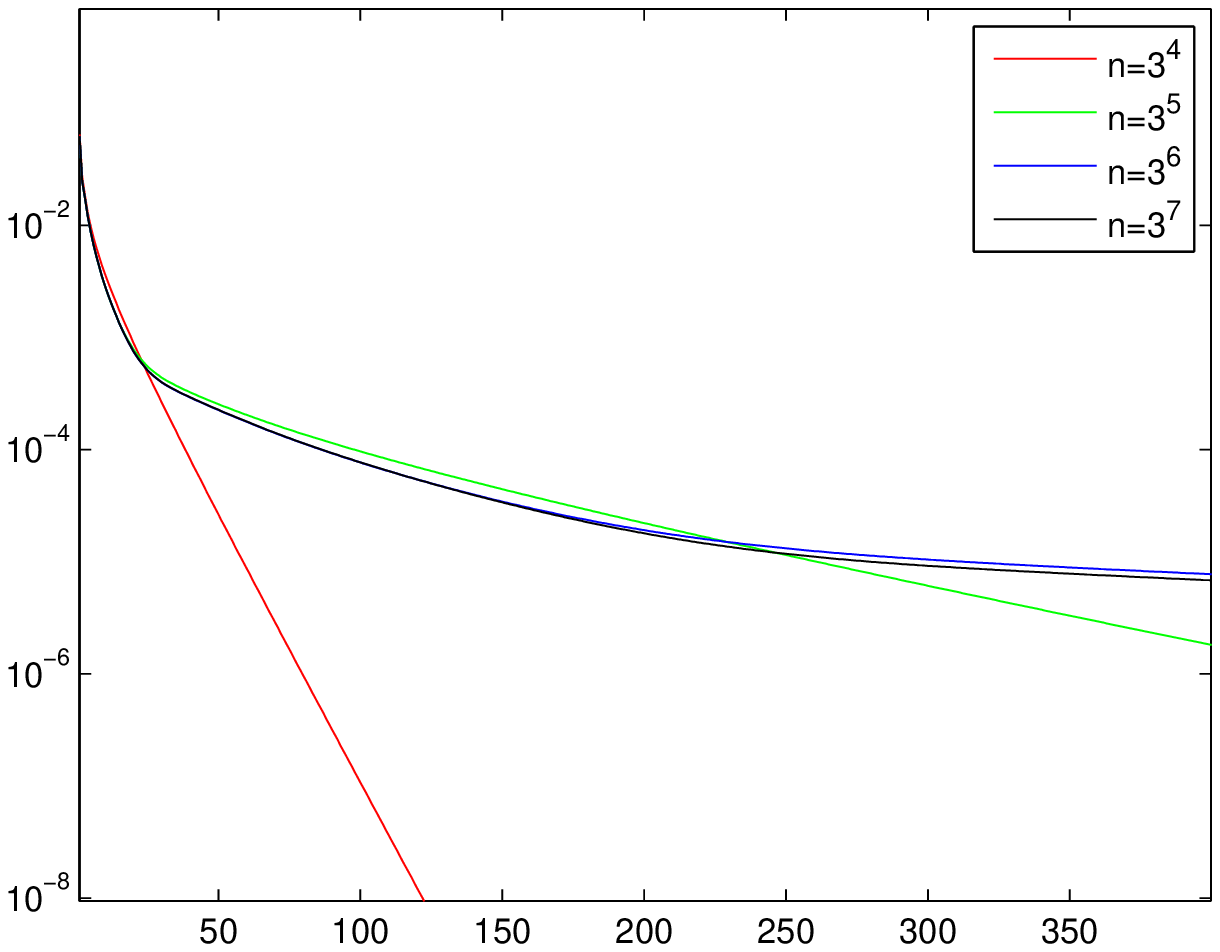}
    \includegraphics[width=0.49\textwidth]{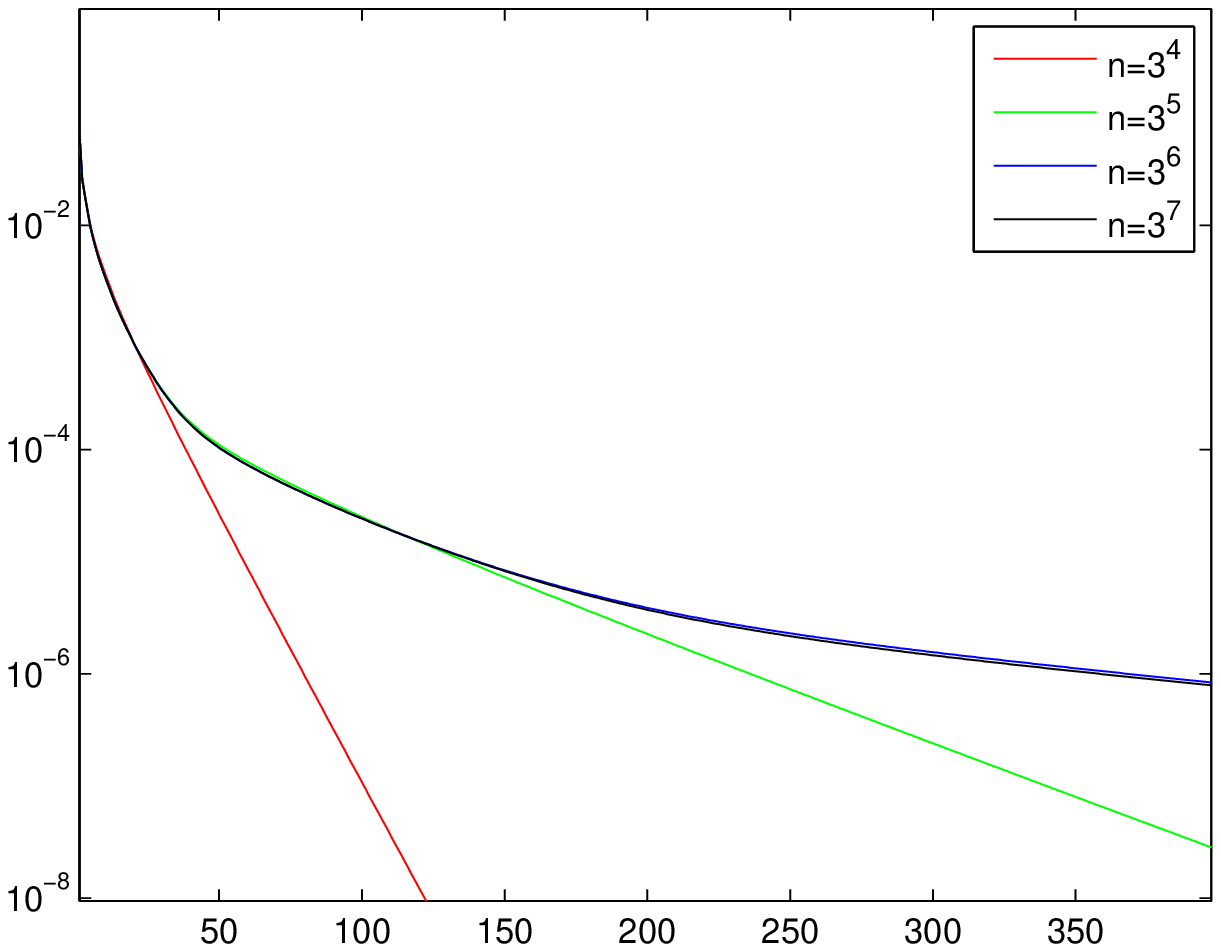}
    \caption{Circulant: Graph of the residual in logarithmic scale of the V-cycle (left) and W-cycle (right) with different sizes $n$, with $\nu_{\rm{pre}}=\nu_{\rm{post}}=1$ and a fixed number of iterations $iter=400$; $f_{0}(x)=(2-2\cos(x))(2+2\cos(x))^2$.}\label{im1}
\end{figure}
\begin{figure}
    \centering
    \includegraphics[width=0.49\textwidth]{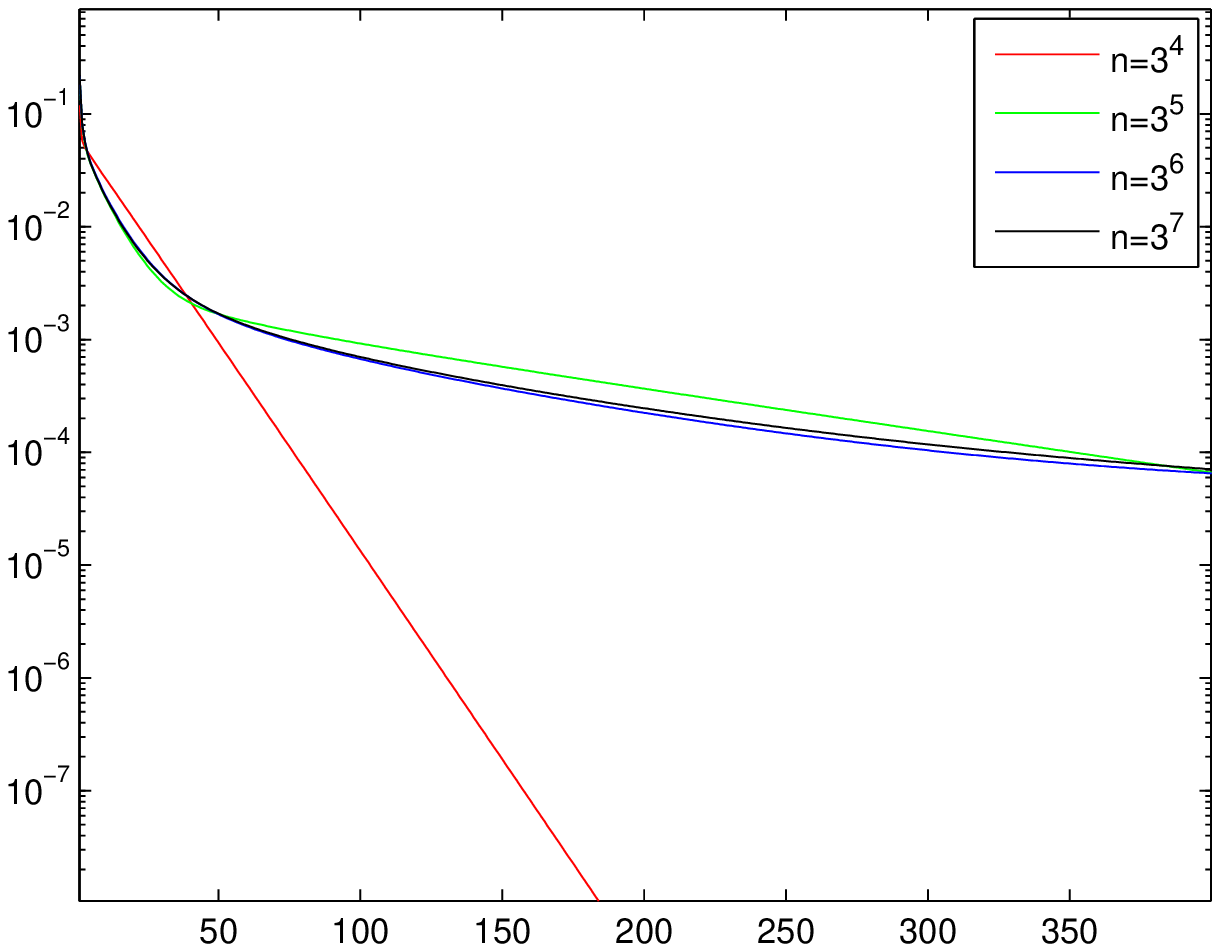}
    \includegraphics[width=0.49\textwidth]{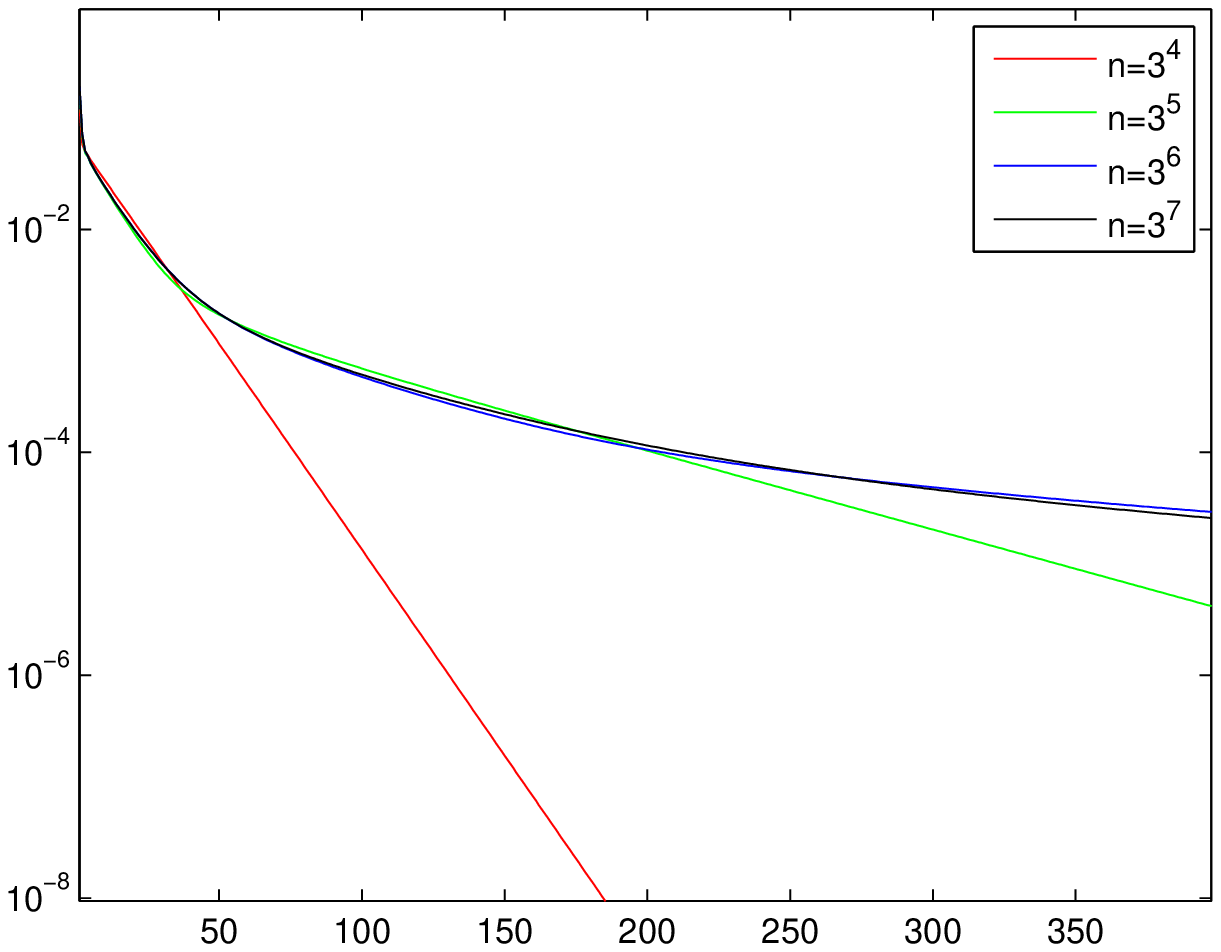}
    \caption{Toeplitz: Graph of the residual in logarithmic scale of the V-cycle (left) and W-cycle (right) with different sizes $n$, with $\nu_{\rm{pre}}=\nu_{\rm{post}}=1$ and a fixed number of iterations $iter=400$; $f_{0}(x)=(2-2\cos(x))(2+2\cos(x))^2$.}\label{im2}
\end{figure}

The last example of this subsection is taken from \cite{CCS}.
The generating function
\[
  f_{0}(x)=6-4\cos(2x)-2\cos(4x),
\]
vanishes at $0$ and $\pi$ with order $2$.
The symbol of the projector is again $p_0=p$ defined in \eqref{eq:pdouble}.
The initial guess is a random vector $u$ such that $0\leq u_{j}\leq 1$,
the pre-smoother is a step of damped Jacobi with parameter $\omega_{j}=[A_j]_{1,1}/\|f(x)\|_\infty$
while the post-smoother is a step of damped Jacobi with parameter $\omega_{j}=2[A_j]_{1,1}/\|f(x)\|_\infty$
for $j=0,\dots,m-1$.
The coarser problem is fixed such that is has size lower than 6.
Table  \ref{tabIV} shows that the number of iterations required to achieve the tolerance $10^{-7}$ remains constant increasing the
size $n$ of the system like for the multigrid technique proposed in \cite{CCS}. The number of iterations is reasonable
in both cases even if a direct comparison can not be done because of the difference in the choice of the projection techniques and in the size of the projected problems.
\begin{table}
\caption{Toeplitz case. $f_{0}(x)=6-4\cos(2x)-2\cos(4x)$, $\nu_{\rm{pre}}=\nu_{\rm{post}}=1$, tolerance=$10^{-7}$}\label{tabIV}
\begin{center}
\begin{tabular}{lccc}
\hline\noalign{\smallskip}
 n & \multicolumn{3}{c}{$\#$ iterations}\\
\hline
  & Two-grid            & W-cycle               & V-cycle\\
\noalign{\smallskip}\hline\noalign{\smallskip}
$3^{4}-3=78$   & 15 & 19 & 28  \\
$3^{5}-3=240$  & 15 & 20 & 39  \\
$3^{6}-3=726$  & 14 & 20 & 45  \\
$3^{7}-3=2184$ & 13 & 20 & 47  \\
\noalign{\smallskip}\hline
\end{tabular}
\end{center}
\end{table}

\subsection{Some Toeplitz examples}
In this subsection we consider only the more interesting case
for practical applications: Toeplitz matrices with a multigrid strategy.

The first example is a function with a zero not at the origin or $\pi$:
\[  f_{0}(x)=\left(2-2\cos\left(x-\frac{\pi}{3}\right)\right)\]
which vanishes at $\pi/3$ with order 2.
Moreover, we choose as true solution a random vector instead of a smooth solution.
The tolerance is again $10^{-7}$.
The symbol of the projector at the first level is
\begin{eqnarray*}
  p_{0}(x)=\left(2-2\cos\left(x-\pi\right)\right)\left(2-2\cos\left(x-\frac{5}{3}\pi\right)\right),
\end{eqnarray*}
while at the lower levels it changes with the zero of $f_j$
which moves according to Proposition \ref{fhat}.
Table \ref{tabIII} shows an optimal convergence both for V-cycle and W-cycle.

\begin{table}
\caption{Toeplitz case. $f_{0}(x)=\left(2-2\cos\left(x-\frac{\pi}{3}\right)\right)$, tolerance=$10^{-7}$.}\label{tabIII}
\begin{center}
\begin{tabular}{lcccc}
\hline\noalign{\smallskip}
 n & \multicolumn{4}{c}{$\#$ iterations}\\
\hline
  & \multicolumn{2}{c}{V-cycle} & \multicolumn{2}{c}{W-cycle}\\
  & $\nu_{\rm{pre}}=$ & $\nu_{\rm{pre}}=$ & $\nu_{\rm{pre}}=$ & $\nu_{\rm{pre}}=$\\
  & $\nu_{\rm{post}}=1$ & $\nu_{\rm{post}}=2$ & $\nu_{\rm{post}}=1$ & $\nu_{\rm{post}}=2$\\
\noalign{\smallskip}\hline\noalign{\smallskip}
$3^{4}-1=80$   & 33 & 37  & 33 & 37\\
$3^{5}-1=242$  & 30 & 31  & 30 & 31\\
$3^{6}-1=728$  & 30 & 31  & 30 & 31\\
$3^{7}-1=2186$ & 30 & 31  & 30 & 31\\
\noalign{\smallskip}\hline
\end{tabular}
\end{center}
\end{table}

In the second example, we consider the dense Toeplitz matrix generated by the function
$f(x)=x^{2}$,
which has the Fourier series expansion
\begin{eqnarray*}
  f(x)=\frac{\pi^{2}}{3}-4\left(\frac{\cos(x)}{1^{2}}-\frac{\cos(2x)}{2^{2}}+\frac{\cos(3x)}{3^{2}}-\cdots\right).
\end{eqnarray*}
Such function shows a unique zero at $0$ with order $2$ and hence we use the projector with symbol
\begin{eqnarray*}
  p_{0}(x)=\left(2-2\cos\left(x-\frac{2}{3}\pi\right)\right)\left(2-2\cos\left(x-\frac{4}{3}\pi\right)\right).
\end{eqnarray*}
In Table \ref{tab5} we report the number of iterations required for the convergence with the preassigned accuracy
and we note again the optimal behavior.
\begin{table}
\caption{Toeplitz case. $f(x)=x^2$}\label{tab5}
\begin{center}
\begin{tabular}{lcccc}
\hline\noalign{\smallskip}
 n & \multicolumn{4}{c}{$\#$ iterations}\\
\hline
  & \multicolumn{2}{c}{V-cycle} & \multicolumn{2}{c}{W-cycle}\\
  & $\nu_{\rm{pre}}=$ & $\nu_{\rm{pre}}=$ & $\nu_{\rm{pre}}=$ & $\nu_{\rm{pre}}=$\\
  & $\nu_{\rm{post}}=1$ & $\nu_{\rm{post}}=2$ & $\nu_{\rm{post}}=1$ & $\nu_{\rm{post}}=2$\\
\noalign{\smallskip}\hline\noalign{\smallskip}
$3^{4}-1=80$  & 21 & 11 & 21 & 11 \\
$3^{5}-1=242$ & 18 & 11 & 21 & 11 \\
$3^{6}-1=728$ & 18 & 11 & 21 & 11 \\
$3^{7}-1=2186$ & 18 & 11 & 21 & 11 \\
\noalign{\smallskip}\hline
\end{tabular}
\end{center}
\end{table}

\section{Conclusions and future work}\label{sec:final}

In this paper we have extended the rigorous two-grid analysis for
circulant matrices to the case where the size reduction is performed by a factor $g$
with $g>2$. The interesting novelty is that the new size reduction strategy allows
to eliminate some pathologies which occur when $g=2$. In particular,
if the considered matrices come from the approximation of certain integro-differential equations
then we have two source of ill-conditioning and the zeros of the underlying symbol are located
at zero and at $\pi$: this situation is a special case of mirror point zeros
and, when $g=2$, it is possible to prove that the resulting two-grid iteration cannot be optimal
(see \cite{FS1,Sun}).
Such difficulty can be overcome when we choose a larger $g$.
Moreover, when increasing $g$ the size of the coarse problems decreases: as a consequence
more multigrid recursive calls could be considered, like the
W-cycle which is proved to be optimal for $g \geq 3$.

We stress that  the numerical experiments are encouraging not only for circulant matrices
but also regarding Toeplitz matrices and concerning the V-cycle algorithm. A
future line of research must include the
multilevel setting, following the approach in \cite{Sun,AD}, and a rigorous proof of convergence for the whole
V-cycle procedure in accordance with the proof technique introduced in \cite{ADS}.


\end{document}